\newcommand{\e}{\varepsilon}
\renewcommand{\d}{{\rm d}}
\newtheorem{thm}{Theorem}[section]
\newtheorem*{ObraztsovTh}{Obraztsov's Theorem}
\newtheorem{cor}[thm]{Corollary}
\newtheorem{lem}[thm]{Lemma}
\newtheorem{prop}[thm]{Proposition}
\newtheorem{prob}[thm]{Problem}
\newtheorem{q}[thm]{Question}
\newtheorem{conj}[thm]{Conjecture}
\theoremstyle{definition}
\theoremstyle{remark}
\newtheorem{rem}[thm]{Remark}
\def\gp#1{\left\langle#1\right\rangle}
\def\nc#1{\gp{\!\gp{#1}\!}}
\def\Z{\mathbb Z}
\def\1{\{1\}}
\let\epsilon\varepsilon
\let\emptyset\varnothing
\begin{document}
\title{On topologizable and non-topologizable groups}
\author{Anton A. Klyachko,
\quad
Alexander Yu. Olshanskii,
\quad
Denis V. Osin
\thanks{This work was supported by the RFBR grant 11-01-00945.
The second and the third authors were also supported by
the NSF grants DMS-1161294 and DMS-1006345, respectively.}}
\date{}
\maketitle

\begin{abstract}
A group $G$ is called hereditarily non-topologizable if, for every
$H\le G$, no quotient of $H$ admits a non-discrete Hausdorff topology. We
construct first examples of infinite hereditarily non-topologizable
groups. This allows us to prove that $c$-compactness does not imply
compactness for topological groups. We also answer several other open
questions about $c$-compact groups asked by Dikranjan and Uspenskij. On
the other hand, we suggest a method of constructing topologizable groups
based on generic properties in the space of marked
$k$-generated groups. As an application, we show that there exist
non-discrete quasi-cyclic groups of finite exponent; this answers a
question of Morris and Obraztsov.
\end{abstract}

\renewcommand{\thefootnote}{\fnsymbol{footnote}}
\footnotetext{{\bf 2000 Mathematics Subject Classification:} 22A05, 20F05, 20F06, 20F65, 20F67, 20B22.}
\footnotetext{{\bf Key words:} Topological group, c-compact group, non-topologizable group, non-discrete topology, Tarski Monster, hyperbolic group.}
\renewcommand{\thefootnote}{\arabic{footnote}}

\section{Introduction}

Throughout this paper, we always assume topological groups and spaces to
be Hausdorff. A well-known theorem of Kuratowski and Mrowka states that a
topological space $X$ is compact if and only if, for any topological space
$Y$, the projection $\pi _Y\colon X\times Y\to Y$ is closed. Motivated by
this theorem, Dikranjan and Uspenskij \cite{DU} call a topological group
$X$ \emph{categorically compact} (or \emph{$c$-compact} for brevity) if,
for every topological group $Y$, the image of every closed subgroup
of~$X\times Y$ under the projection $\pi _Y\colon X\times Y\to Y$ is
closed in $Y$.

Obviously every compact group is $c$-compact, while the converse was open
until now even for discrete groups. More precisely, the following
questions were asked in \cite[Question 1.2 and Question 5.2]{DU} (see also
\cite[Problem 31 (i) and Question 34]{OPT}).

\begin{prob}\label{prob1}
\hfil
\begin{enumerate}
\item[\rm(a)] Is every $c$-compact group compact?
\item[\rm(b)] Is every discrete $c$-compact group finite (finitely
generated, of finite exponent, countable)?
\end{enumerate}
\end{prob}

These questions have received considerable attention in the recent years. A complete survey of recent results can be found in the book \cite{Luk-book}, which is devoted to these problems. Until now, only results in the affirmative direction were known. For example, the answer to~(a) is known to be positive for solvable groups,
connected locally compact groups \cite{DU}, and maximally almost periodic groups \cite{Luk02}. Note also that every discrete $c$-compact group is necessarily a torsion group by
\cite[Theorem 5.3]{DU}.

Our first goal is to show that the answer to all parts of Problem
\ref{prob1} is, in fact, negative. Our approach is based on a
sufficient condition for $c$-compactness suggested in~\cite{DU}, which
leads to the notion of a hereditarily non-topologizable group introduced
by Luk\'acs \cite{Luk}.

Recall that an abstract group is called {\it topologizable} if it admits a
non-discrete Hausdorff group topology, and \emph{non-topologizable}
otherwise. In 1946, A.~A.~Markov \cite{Mar} asked whether there exist
non-topologizable infinite groups and the problem remained open until late
70's. In~\cite{She}, Shelah constructed first (uncountable) examples using
the Continuum Hypothesis. Later Hesse \cite{Hes} showed that the use of
the CH in Shelah's proof can be avoided. The affirmative answer to the
Markov's question for countable groups was obtained by the second author
in~\cite{Ols80} (see also \cite[Theorem 31.5]{book}); the proof uses the
group constructed by Adjan in \cite{A} and is essentially elementary
modulo the main theorem of \cite{A}. Since then many other examples of non-topologizable
groups have been found (see, for example, \cite{KT}).

A group $G$ is called \emph{hereditarily non-topologizable} if for every
$H\le G$ and every $N\lhd H$, the quotient group $H/N$ is
non-topologizable. It is easy to prove  that every
hereditarily non-topologizable group is $c$-compact
with respect to the discrete topology (see \cite[Corollary 5.4]{DU});
moreover, a countable group is hereditarily non-topologizable if and only
if it is $c$-compact with respect to the discrete
topology \cite[Theorem 5.5]{DU}.

Using techniques developed in \cite{book} we prove the following result
(see Theorem \ref{hnt}), which completely solves Problem \ref{prob1}.

\begin{thm}\label{hnt-simple}
There exist hereditarily non-topologizable
(and hence $c$-compact
with respect to the discrete topology)
groups $G$, $H$, $I$, and $J$ such that:
\begin{enumerate}
\item[\rm(a)] $G$ is infinite, finitely generated, and of bounded exponent;

\item[\rm(b)] $H$ is finitely generated and of unbounded exponent;

\item[\rm(c)] $I$ is countable, but not finitely generated;

\item[\rm(d)] $J$ is uncountable.
\end{enumerate}
\end{thm}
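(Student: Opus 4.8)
The plan is to construct the four groups as suitably chosen quotients of free groups using the graded small cancellation machinery of the book [book], building each group as a direct limit of hyperbolic groups. The key point is that hereditary non-topologizability is a very strong condition: it requires that every subgroup and every quotient thereof be non-topologizable. A natural way to force non-topologizability of a group $Q$ is to arrange that $Q$ have no proper infinite quotients that are residually finite or, more precisely, that $Q$ admit a finite system of equations whose solution set is simultaneously the whole group and a proper closed subset in any non-discrete topology --- equivalently, to make suitable ``verbal'' or ``unconditional'' identities force discreteness. Concretely, I would aim to produce infinite simple groups in which every proper subgroup is ``small'' (e.g. cyclic or locally finite) and controlled enough that the non-topologizability propagates down to all subgroups and their quotients.

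First I would recall the standard criterion: if a group $Q$ contains an element $g$ and finitely many elements such that the equation, or system of equations, defining a single point has solution set forced to be open, then every Hausdorff topology on $Q$ is discrete. The cleanest realization is via Tarski Monsters and their relatives: an infinite group all of whose proper nontrivial subgroups are cyclic of a fixed prime order $p$ is a prototype, and one shows such groups are non-topologizable because the word map $x \mapsto x^p$ is constant (equal to $1$) yet its fibers cannot be closed in a non-discrete topology. For hereditary non-topologizability, every subgroup and quotient must inherit an analogous rigidity, so I would build the groups so that the lattice of subgroups is extremely restricted --- ideally so that every subgroup is again of the same controlled type.

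For part~(a) I would take $G$ to be an infinite, finitely generated group of bounded exponent $n$ in which every proper subgroup is finite (a ``Tarski-like'' monster of bounded exponent), constructed by the graded small cancellation method as an inductive limit of hyperbolic groups where at each stage one kills a generating pair by imposing relations making the generated subgroup finite, while preserving hyperbolicity and infiniteness via the small cancellation condition. The non-topologizability of $G$ and of each quotient then follows from the bounded exponent together with the structure of subgroups. For part~(b) I would modify the construction to allow elements of unbounded order, relaxing the exponent bound but keeping the subgroup structure rigid, which is the natural variant producing unbounded exponent while remaining finitely generated. For part~(c) I would take $I$ to be a countable locally finite or countably generated ascending union of the finitely generated examples, and for part~(d) I would form an uncountable group, e.g. as a direct product or an uncountable analogue built by transfinite iteration of the same small cancellation steps, checking at each stage that hereditary non-topologizability is preserved.

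The main obstacle will be verifying \emph{hereditary} non-topologizability rather than mere non-topologizability: one must control \emph{all} subgroups $H \le G$ and \emph{all} quotients $H/N$ simultaneously, which requires a precise description of the subgroup structure of the limit group and a propagation argument showing the relevant word-map rigidity survives passage to arbitrary subgroups and quotients. The graded small cancellation theory of [book] provides exactly the control over subgroups (via the classification of subgroups of the hyperbolic quotients at each stage and their behavior in the limit) needed to push this through; the delicate part is choosing the defining relations at each stage so that the inductive hyperbolicity and the subgroup rigidity are maintained together, and then assembling the limit so that the non-topologizability identities hold uniformly across the whole subgroup-and-quotient lattice. I expect the bulk of the technical work to lie in this simultaneous control, carried out via the detailed machinery referenced as Theorem~\ref{hnt}.
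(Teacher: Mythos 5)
There is a genuine gap at the heart of your proposal: your claimed mechanism for non-topologizability is false. You assert that a Tarski-type monster of exponent $p$ is non-topologizable ``because the word map $x\mapsto x^p$ is constant,'' but an identity $x^n=1$ holding on all of $G$ gives no obstruction to a non-discrete Hausdorff topology --- indeed, Theorem \ref{ttm} of this very paper produces \emph{topologizable} Tarski Monsters of finite exponent, and free Burnside groups of large odd exponent are topologizable as well (Remark \ref{not-c-comp}). The correct criterion is Markov's (Lemma \ref{Markov}): a countable group is non-topologizable if and only if $G\setminus\{1\}$ is a finite union of elementary algebraic sets, i.e.\ one needs equations whose solution set is \emph{everything except the identity} (or a finite union of such sets), not equations satisfied by everything. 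The paper achieves this with a specific trick your proposal is missing: it adjoins an auxiliary finite cyclic factor $G_0=\langle g\rangle$ of order coprime to the exponent $n_0$ via Obraztsov's embedding theorem, proves a diagram-theoretic lemma that $gg^x$ is never conjugate into a factor when $x\notin G_0$ (hence $(gg^x)^{n_0}=1$ for all such $x$), and then exhibits the single equation $[(gg^x)^{n_0},(g^vg^{vx})^{n_0}]=1$ whose non-solution set is exactly $\{1\}$ --- the coprimality of $|g|$ to $2n_0$ is what forces $x=1$ to fail the equation. Without some replacement for this step, your construction does not establish non-topologizability even for the basic building blocks, let alone hereditarily.

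Two smaller but real problems: for part (c), a locally finite group cannot work, since infinite locally finite groups are always topologizable \cite{Bel}; and for part (d), a direct product of infinitely many factors admits a nested chain of infinite normal subgroups with trivial intersection and is therefore topologizable. The paper instead builds $I$ as an ascending union $G^0<G^1<\cdots$ with $G^{i+1}=O(G^i,\Z_{n_0})$, so that each term sits as a maximal subgroup of the next with all other proper subgroups finite cyclic, and builds $J$ by transfinite iteration of the same step; hereditary non-topologizability then follows because every proper infinite subgroup is conjugate to some $G^i$, which is simple and non-topologizable. Your instinct that the whole difficulty lies in controlling the subgroup-and-quotient lattice is right, and the graded small cancellation machinery is indeed the tool, but the decisive idea --- the equation with exactly one non-solution, enabled by the coprime cyclic factor --- is absent from your outline.
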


On the other hand, it is worth noting that neither of the groups
constructed in \cite{KT,Ols80,She} is $c$-compact (see Remark
\ref{not-c-comp}).

The finitely generated groups $G$ and $H$ from Theorem \ref{hnt-simple}
are the so-called Tarski Monsters, i.e.,
infinite
simple groups with all proper subgroups finite cyclic. First examples of
such groups were constructed by the second author in \cite{Ols79}. Clearly
every non-topologizable Tarski Monster is hereditarily non-topologizable.
This raises the natural question of whether a Tarski Monster can be
topologized. The standard way of defining a non-discrete topology using
chains of subgroups obviously fails for Tarski Monsters. Moreover, most
groups which are known to be topologizable, such as
infinite
residually finite
groups,
infinite
locally finite groups \cite{Bel}, or groups containing infinite
normal solvable subgroups \cite{Hes}, are located on the opposite side of
the group-theoretic universe.

The first (and the only
known)
examples of topologizable Tarski Monsters were
constructed by Morris and Obraztsov in \cite{MO} using methods from
\cite{book}. An essential feature of the Morris--Obraztsov construction is
that their groups have unbounded exponent and for finite exponent their
method of defining a non-discrete topology seems to fail. This motivated
the following.

\begin{q}{\rm {\cite[Question 3]{MO}}}
Does there exist a topologizable quasi-finite group of finite exponent?
\end{q}
Recall that a group is \emph{quasi-finite} if all its proper subgroups are
finite and is of \emph{finite exponent $n$} if $g^n =1$ for some positive integer $n$ and every $g\in G$. In this paper we answer the Morris--Obraztsov's question
affirmatively.

\begin{thm}\label{ttm}
For every sufficiently large odd $n\in \mathbb N$
there exists a topologizable Tarski Monster of exponent $n$.
\end{thm}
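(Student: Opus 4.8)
The plan is to construct a Tarski Monster of exponent $n$ and simultaneously equip it with a non-discrete Hausdorff group topology, using the generic-properties-in-the-space-of-marked-groups method advertised in the abstract. The idea is to build the group as a limit of a sequence of hyperbolic groups, adding relations step by step, and to interleave this construction with a careful choice of a filter base of "neighborhoods of the identity" that will define the topology. I would work in the space $\mathcal{M}_2$ of marked $2$-generated groups (Tarski Monsters are $2$-generated), where convergence of marked groups is controlled by agreement of relations up to a given length.

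\medskip

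First I would set up the inductive construction. Following the machinery of \cite{book} (the same engine used by Obraztsov and by Morris--Obraztsov in \cite{MO}), I would produce an increasing sequence of relations $R_1, R_2, \dots$ so that each partial quotient $G_i = F_2 / \langle\!\langle R_1, \dots, R_i \rangle\!\rangle$ is a non-elementary hyperbolic group of exponent dividing $n$, in which every proper subgroup is cyclic, and so that the imposed relations satisfy the small-cancellation-type conditions that guarantee the limit $G = F_2/\langle\!\langle \bigcup_i R_i\rangle\!\rangle$ is an infinite Tarski Monster of exponent exactly $n$. This part is the standard hard analysis of \cite{book,Ols79}, and for large odd $n$ it is known to succeed; I would quote it rather than redo it.

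\medskip

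The genuinely new ingredient is topologization, and here I would exploit a genericity/Baire-category argument rather than an explicit chain of subgroups (which, as the excerpt notes, is unavailable for Tarski Monsters since they have no infinite proper subgroups). The plan is to choose at each stage a finite set $U_i$ of nontrivial elements that I declare to lie \emph{outside} the $i$-th basic neighborhood, and to arrange the relation-adding steps so that the collection of prescribed neighborhoods $\{V_i\}$ forms a filter base of subsets of $G$ satisfying Markov's axioms for a group topology: each $V_i$ is symmetric, contains products $V_{i+1}V_{i+1}\subseteq V_i$, is invariant enough under conjugation, and the intersection $\bigcap_i V_i = \{1\}$ (Hausdorffness), while each $V_i$ contains infinitely many elements (non-discreteness). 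The key is that because at finite stages $G_i$ is hyperbolic, I have quantitative control (via the isoperimetric/Greendlinger-type lemmas of \cite{book}) over which short words are nontrivial, and I can diagonalize: at step $i$ I ensure some new element is forced \emph{into} every $V_j$ with $j>$ some bound while keeping the chosen $U_i$ out, so that no $V_i$ shrinks to a single point. Ensuring the multiplicativity $V_{i+1}V_{i+1}\subseteq V_i$ together with conjugation-invariance in a \emph{simple} group is the delicate point: conjugation-invariance would force the $V_i$ to generate normal structure, so instead I would define the topology via one-sided control and verify continuity of multiplication and inversion directly on the generating neighborhoods, using hyperbolic geometry to bound the length of products.

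\medskip

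The main obstacle I anticipate is reconciling two opposing pressures. The Tarski-Monster construction wants to kill all structure (every element of infinite order is excluded, every proper subgroup collapses to a finite cyclic group), whereas a non-discrete topology needs a robust supply of small elements clustering at $1$. In the unbounded-exponent case Morris and Obraztsov could use elements of growing order to feed the neighborhood filter; in finite exponent $n$ every element has the same bounded order, so the clustering must come from \emph{many distinct} short elements rather than from high-order elements, and I must guarantee the construction never isolates $1$. Concretely, the hard step is to verify, after imposing the defining relations of the Tarski Monster, that infinitely many of the designated "small" words survive as distinct nontrivial elements in the limit $G$ and that the prescribed sets remain a genuine neighborhood base (in particular that $V_{i+1}V_{i+1}\subseteq V_i$ is not sabotaged by a late-stage relation). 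I expect this to require running the topological bookkeeping and the relation-adding simultaneously in a single induction, so that each new relation is chosen in the complement of the current finite approximation to the neighborhoods, which is exactly where the genericity framework over $\mathcal{M}_2$ does the work.
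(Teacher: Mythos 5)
Your proposal ultimately attempts a \emph{direct} construction: build one group $G$ by adding relations, and simultaneously hand-craft a neighborhood filter $\{V_i\}$ at the identity, verifying $V_{i+1}V_{i+1}\subseteq V_i$, conjugation control, and $\bigcap_i V_i=\{1\}$ in the limit. This is precisely the approach that breaks down in finite exponent and is the reason the Morris--Obraztsov question was open: a Tarski Monster of exponent $n$ has no infinite proper subgroups and no elements of growing order, so there is no known mechanism to certify that the multiplicativity and conjugation conditions on the $V_i$ survive all later relations. You correctly identify this as ``the hard step,'' but you offer no mechanism to resolve it; saying that ``the genericity framework does the work'' at that point is not an argument, because as written your genericity is only used to choose relations avoiding finitely many words, which does not produce a group topology.

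The paper's actual proof avoids ever exhibiting a topology on a Tarski Monster, and this decoupling is the idea you are missing. It builds a compact family $\mathcal T\subseteq\mathcal G_2$ of groups $G(\infty,J)$ of exponent $n_0$ indexed by subsets $J\subseteq\mathbb N$ (a Lipschitz image of the Cantor set, hence a Baire space). For \emph{finite} $J$ the group $G(\infty,J)$ is far from simple: it admits a descending chain of infinite normal subgroups with trivial intersection, hence is topologizable by the classical chain method --- no filter gymnastics needed, because these groups are not Tarski Monsters. For \emph{cofinite} $J$ the group $G(\infty,J)$ \emph{is} a Tarski Monster of exponent $n_0$. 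Both index families are dense in the Cantor set, so both properties hold on dense subsets of $\mathcal T$. The second essential ingredient, absent from your proposal, is that topologizability is a $G_\delta$ condition in $\mathcal G_k$: by Markov's criterion a countable group is non-topologizable iff $G\setminus\{1\}$ is a finite union of elementary algebraic sets, and negating this over all finite systems of equations gives a countable intersection of existentially definable (hence open) conditions. Likewise ``Tarski Monster of exponent $n_0$'' is $G_\delta$. Baire category in the compact space $\mathcal T$ then yields a group that is simultaneously topologizable and a Tarski Monster, with no explicit topology ever constructed on it. Without the $G_\delta$ statements and the two separately-dense witness families, your outline does not close.
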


Our proof of Theorem \ref{ttm} utilizes the notion of a generic
property in a topological space.  Recall that a subset $S$ of a topological
space $X$ is called a \emph{$G_\delta$ set} if $S$ is an intersection of a
countable collection of open sets. Further one says that a \emph{generic
element of $X$ has a certain property $P$} (or $P$ is \emph{generic} in
$X$) if $P$ holds for every $x$ from some dense $G_\delta$ subset of $X$.
The Baire Category Theorem implies that in a complete metric space the
intersection of any countable collection of dense $G_\delta$ sets is again
dense $G_\delta$. Thus we can combine generic properties: if every property from a countable collection
$\{ P_1, P_2, \ldots \}$ is generic, then so is the whole collection (i.e., the conjunction of $P_1,P_2,\ldots$). In many situations this approach is
useful for proving the \emph{existence} of elements of $X$ simultaneously
satisfying $P_1, P_2, \ldots $.

To implement this idea we consider the \emph{space of marked $k$-generated
groups}, $\mathcal G_k$, which is a compact totally disconnected metric
space consisting of all $k$-generated groups with fixed generating sets.
For the precise definition we refer to Section 3. The study of generic properties in subspaces of $\mathcal G_k$ was initiated by Champetier in \cite{Ch}. The following observation is crucial for our proof of Theorem \ref{ttm}.

\begin{prop}\label{Gd-simple}
For every $k\in \mathbb N$, the following subsets of $\mathcal G_k$
are $G_\delta$:
\begin{enumerate}
\item[\rm(a)] the set of all topologizable groups;
\item[\rm(b)] the set of all Tarski Monsters of any fixed finite exponent.
\end{enumerate}
\end{prop}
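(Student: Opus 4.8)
The plan is to work in the standard model of $\mathcal G_k$ as the space of normal subgroups $N\lhd F_k$ of the free group $F_k=\gp{x_1,\dots,x_k}$, with marked group $F_k/N$; a subbasis of clopen sets is given, for each $w\in F_k$, by $\{N: w\in N\}$ and its complement. Two elementary observations drive everything. First (the \emph{open building block}): any property of $N$ of the form ``there exist words $W_1,\dots,W_r$ such that finitely many membership statements $W_i\in N$ and non-membership statements $W_j\notin N$ hold'' defines an open set, since for each fixed choice of the $W_i$ the conjunction is clopen and we take the countable union over all choices. Second (the \emph{$G_\delta$ building block}): a property of the form ``for all words $w$, [an open condition] holds'' defines a $G_\delta$ set, being a countable intersection of open sets. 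Part (b) is a direct application of these, while part (a) requires first replacing ``topologizable'' by a finitary, extendability-style criterion.

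For (b), I would express membership in the set $\mathcal T_n$ of Tarski Monsters of exponent $n$ as a finite conjunction of $G_\delta$ conditions. Infiniteness is $G_\delta$: ``$|F_k/N|\ge m$'' is open (it asserts the existence of $m$ words pairwise non-congruent mod $N$, an open building block), and infinite $=\bigcap_m\{|F_k/N|\ge m\}$. Having exponent dividing $n$ is the closed (hence $G_\delta$) condition $\bigcap_{w}\{w^n\in N\}$, and exponent exactly $n$ adds the open condition that some $w$ satisfies $w^d\notin N$ for every proper divisor $d\mid n$. Simplicity is $G_\delta$: for each $w$ the condition ``$w\in N$ or $\nc{w}=F_k/N$'' is open (the second alternative says each generator $x_i$ is a product of conjugates of $w^{\pm1}$ modulo $N$, an open building block), and simplicity is the intersection over all $w$. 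The decisive condition is that \emph{every $2$-generated subgroup is cyclic or equal to the whole group}: for fixed $u,v$, both ``$\gp{\bar u,\bar v}=F_k/N$'' (each $x_i$ is a word in $\bar u,\bar v$) and ``$\gp{\bar u,\bar v}$ is cyclic'' (there are $w$ and integers $a,b$ with $uw^{-a},vw^{-b}\in N$) are open building blocks, so their union $C(u,v)$ is open and $\bigcap_{u,v}C(u,v)$ is $G_\delta$. A short group-theoretic lemma shows that, for an infinite group of exponent $n$, this last condition is equivalent to all proper subgroups being cyclic: a proper subgroup then has all its $2$-generated subgroups cyclic, hence is locally cyclic of bounded exponent, hence finite cyclic, while an infinite abelian subgroup or a copy of $(\Z/p)^2$ would violate some $C(u,v)$. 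Intersecting these finitely many $G_\delta$ sets gives $\mathcal T_n\in G_\delta$.

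For (a) the obstacle is that ``$G$ is topologizable'' carries an existential quantifier over topologies, which is a priori only analytic. The first step is a reduction lemma: a countable group admitting any non-discrete Hausdorff group topology admits a \emph{metrizable} one. Indeed, enumerating $G\setminus\{1\}=\{g_1,g_2,\dots\}$ and using continuity of multiplication, inversion, and conjugation by the (finitely many) generators, one greedily builds symmetric neighborhoods $U_1\supseteq U_2\supseteq\cdots$ of $1$ with $U_{m+1}U_{m+1}\subseteq U_m$, $x_i^{\pm1}U_{m+1}x_i^{\mp1}\subseteq U_m$, and $g_m\notin U_m$; since each $U_m$ is a neighborhood of the non-isolated point $1$ it is nontrivial, so the resulting first-countable topology is non-discrete and Hausdorff. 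Thus topologizability is equivalent to the existence of a sequence $(U_m)$ of subsets of $G$ satisfying these algebraic axioms together with $\bigcap_m U_m=\{1\}$ and $U_m\neq\{1\}$ for all $m$.

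The second and main step is to convert this still-infinitary existence statement into a $G_\delta$ condition. I would introduce \emph{finite approximations}: for parameters $M,R$, an assignment to each word $w$ of length $\le R$ of a set of levels $S(w)\subseteq\{1,\dots,M\}$ (recording ``$\bar w\in U_m$''), required to respect the algebraic axioms insofar as they are visible inside the ball of radius $R$, to be consistent with $N$ (words congruent mod $N$ get equal labels, words in $N$ get all labels), to carry a fresh nontrivial witness at level $M$, and to separate the first $M$ nontrivial elements. For fixed $M,R$ there are finitely many assignments and each consistency clause is clopen, so ``there exist $R$ and a valid approximation at level $M$'' is an open building block; hence $\bigcap_M\{\exists\,\text{valid level-}M\text{ approximation}\}$ is $G_\delta$. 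The heart of the proof is to show this intersection equals the set of topologizable groups. The forward inclusion is immediate by restricting the metrizable base $(U_m)$ of the first step to finite balls. The reverse inclusion is the delicate part: given valid approximations at every level, one must diagonalize, choosing a coherent refining chain of approximations, to assemble genuine infinite sets $(U_m)$ satisfying the \emph{global} axioms, with the fresh witnesses guaranteeing $U_m\neq\{1\}$ (non-discreteness) and the separation clauses guaranteeing $\bigcap_m U_m=\{1\}$ (Hausdorffness). Ensuring that the finitely-checkable shadows actually amalgamate into sets closed under the infinite versions of $U_{m+1}U_{m+1}\subseteq U_m$ and the conjugation conditions is the main technical obstacle, and it is where the bookkeeping must be arranged so that the limit topology is simultaneously non-discrete and Hausdorff.
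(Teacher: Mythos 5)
Your part (b) is essentially the paper's own argument: infiniteness, the identity $x^n=1$, simplicity, and the condition that every $2$-generated subgroup is cyclic or equal to the whole group are each written as countable intersections of open sets built from clopen word conditions, and the small lemma bridging ``every $2$-generated subgroup cyclic'' and ``every proper subgroup finite cyclic'' for an infinite group of exponent $n$ is the same as in the paper. That part is correct.

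Part (a) contains a genuine gap. Your reduction of topologizability of a countable group to the existence of a nested sequence $(U_m)$ of symmetric neighborhoods of $1$ with $U_{m+1}U_{m+1}\subseteq U_m$, controlled conjugation by generators, $g_m\notin U_m$ and $U_m\neq\{1\}$ is correct and standard. But the passage from ``a valid finite approximation exists at every level $M$'' back to the existence of such a global sequence is not carried out, and it is exactly where the difficulty sits. The conditions $U_{m+1}U_{m+1}\subseteq U_m$ and $g_m\notin U_m$ are closed in the product topology on families of subsets of $G$ and survive limits, but the condition $U_M\neq\{1\}$ is open and does \emph{not}: when you amalgamate approximations across different $M$ (or take a subsequential limit of the families produced by K\"onig's lemma for each $M$ separately), the nontrivial witnesses can escape to infinity and the limit family can be discrete. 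You flag this as ``the main technical obstacle'' but do not resolve it, so the asserted equality of your $G_\delta$ set with the set of topologizable groups is unproven; in effect you would be reproving a version of Markov's theorem from scratch.

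The paper sidesteps all of this by invoking Markov's criterion (Lemma 2.1 of the paper): a countable group $G$ is non-topologizable if and only if $G\setminus\{1\}$ is a finite union of elementary algebraic sets, i.e.\ solution sets of one-variable equations $a_0x^{\e_1}a_1x^{\e_2}\cdots a_kx^{\e_k}=1$. Topologizability is then the conjunction, over all finite collections $E_n$ of one-variable equations (a countable family, since coefficients are words in the generators), of the existential condition ``some nontrivial element satisfies none of the equations in $E_n$, or $1$ satisfies one of them''; each such condition defines an open subset of $\mathcal G_k$, and the intersection is $G_\delta$. If you wish to keep your direct approach, you must either prove the amalgamation step (e.g.\ by forcing the level-$(M{+}1)$ data to restrict to the level-$M$ data and running one compactness argument on the resulting finitely branching tree, with witnesses pinned down so they persist in the limit), or replace it with a citation of Markov's characterization.
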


Using methods from the book \cite{book}, for every sufficiently large odd
$n\in \mathbb N$ we construct a compact
nonempty subset
$\mathcal T\subseteq \mathcal G_2$ consisting of groups of exponent $n$
such that $\mathcal T$ contains a dense subset of topologizable groups and
a dense subset of Tarski Monsters. Then by Proposition \ref{Gd-simple} the
properties of being topologizable and being a Tarski Monster are generic
in $\mathcal T$. Hence topologizable Tarski Monsters (of exponent $n$) are
generic in $\mathcal T$. In particular, they exist.

All Tarski Monsters discussed above, as well as many other groups with
``exotic" properties, are limits of hyperbolic groups.
It is not difficult to see that every infinite hyperbolic group is
topologizable, but a much weaker condition also makes a group
topologizable; namely,
in the last section
we observe that being topologizable is a generic property among limits of
``hyperbolic-like" groups. More precisely, we consider the class of acylindrically hyperbolic groups introduced in \cite{Osi13}. This class contains all non-elementary hyperbolic groups,
non-elementary relatively hyperbolic groups with proper peripheral
subgroups (e.g., all non-trivial free products other than
$\mathbb Z_2\ast \mathbb Z_2$), mapping class groups of surfaces of genus $>1$, $Out(F_n)$ for $n\ge 2$,
and many other interesting examples. For the
definition and more details we refer to \cite{Osi13} and references therein.

Given a subset
$S\subseteq \mathcal G_k$, we denote by $\overline{S}$ its closure in
$\mathcal G_k$.

\begin{thm}\label{hypemb}
Let $\mathcal{S}\subseteq \mathcal G_k$ be a subset consisting of (marked
$k$-generated) acylindrically hyperbolic groups. Then being topologizable is a generic property in the set
$\overline{\mathcal S}$.
\end{thm}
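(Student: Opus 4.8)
The plan is to reduce the statement, via Proposition~\ref{Gd-simple}(a) and the Baire category theorem, to the single assertion that every acylindrically hyperbolic group is topologizable, and then to establish that assertion by producing a suitable convergent sequence.

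First I would set up the category framework. The space $\mathcal G_k$ is compact and metrizable, so its closed subset $\overline{\mathcal S}$ is again compact metrizable, hence a Baire space; thus a property is generic in $\overline{\mathcal S}$ exactly when it holds on a dense $G_\delta$ subset. Write $\Theta\subseteq\mathcal G_k$ for the set of topologizable groups. By Proposition~\ref{Gd-simple}(a), $\Theta$ is $G_\delta$ in $\mathcal G_k$, so $\Theta\cap\overline{\mathcal S}$ is $G_\delta$ in $\overline{\mathcal S}$. It therefore suffices to prove that $\Theta\cap\overline{\mathcal S}$ is \emph{dense} in $\overline{\mathcal S}$. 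Since $\mathcal S$ is dense in its closure $\overline{\mathcal S}$, this will follow from the inclusion $\mathcal S\subseteq\Theta$; that is, from the claim that every marked acylindrically hyperbolic group is topologizable.

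Next I would prove this claim for an arbitrary acylindrically hyperbolic group $G$ (the marking is irrelevant). The mechanism is the Protasov--Zelenyuk theory of sequential group topologies: a sequence $(x_n)$ of distinct nontrivial elements is a \emph{$T$-sequence} if $G$ carries a Hausdorff group topology in which $x_n\to 1$, and any such topology is automatically non-discrete, so a single $T$-sequence topologizes $G$. Because $G$ is acylindrically hyperbolic it contains a loxodromic element $g$ whose maximal elementary subgroup satisfies $E(g)\h G$ (by the structure theory of~\cite{Osi13}). I would take $x_n=g^{a_n}$ with exponents $(a_n)$ growing very rapidly, and let $[N]$ be the tail neighborhoods built from products of the $x_n^{\pm1}$, and their conjugates, with indices $\ge N$. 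By the Protasov--Zelenyuk criterion these sets determine a group topology with $x_n\to 1$, which is Hausdorff and non-discrete as soon as $\bigcap_N[N]=\{1\}$, i.e.\ as soon as no nontrivial short product $x_{n_1}^{\e_1}\cdots x_{n_r}^{\e_r}$ (or conjugate thereof), with all $n_i$ large, can equal $1$ in $G$.

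The main obstacle is exactly this non-collapsing estimate, and it is where the geometry enters. Since $E(g)\h G$ is hyperbolically embedded it is almost malnormal, and $g$ acts loxodromically on the associated hyperbolic space with large translation length; choosing the exponents $a_n$ to increase fast enough, high powers of $g$ behave like a small-cancellation family, so a bounded product of such powers and their conjugates cannot represent the identity unless it is trivial for formal reasons. Turning this picture into the precise inequalities required by the $T$-sequence criterion is the crux of the argument and would be carried out using the small-cancellation and geodesic estimates of~\cite{book} together with the theory of hyperbolically embedded subgroups. I expect this geometric step, rather than the surrounding soft argument, to be where essentially all the work lies.
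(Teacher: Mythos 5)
Your soft reduction coincides with the paper's: $\overline{\mathcal S}$ is a closed subset of the compact metrizable space $\mathcal G_k$, hence Baire; by Proposition \ref{Gd-simple}(a) (Proposition \ref{Gd}(a) in the body) together with Remark \ref{Gd-in-a-subset}, the topologizable groups form a $G_\delta$ subset of $\overline{\mathcal S}$; and density reduces to the single claim that every acylindrically hyperbolic group is topologizable, which is exactly Lemma \ref{he} of the paper. The divergence --- and the problem --- is in how you propose to prove that claim.

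The paper's proof of Lemma \ref{he} is short and complete: by \cite[Theorem 2.23]{DGO}, an acylindrically hyperbolic group $G$ contains a hyperbolically embedded subgroup $H\cong\mathbb Z\times K$ with $K$ finite; one takes infinite subgroups $N_1\rhd N_2\rhd\cdots$ of $H$ with trivial intersection, lets $M_i$ be the normal closure of $N_i$ in $G$, and invokes the group-theoretic Dehn surgery theorem \cite[Theorem 2.25 (c)]{DGO} to conclude $\bigcap_i M_i=\{1\}$; the chain $\{M_i\}$ is then a base of neighborhoods of $1$ for a non-discrete Hausdorff group topology. Your route via $T$-sequences is different in form, but the step you explicitly defer --- that no nontrivial short product of conjugates of the $x_n^{\pm1}$ with large indices equals $1$ --- is a genuine gap, and it is not a routine small-cancellation computation. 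Since $G$ is non-abelian, the neighborhood base generated by your tail sets must be invariant under conjugation by arbitrary elements of $G$, so the sets $[N]$ contain arbitrary products of arbitrary conjugates of the elements $x_n^{\pm1}$, $n\ge N$; controlling those amounts to controlling the normal closures $\langle\!\langle x_N,x_{N+1},\ldots\rangle\!\rangle$, and the required assertion $\bigcap_N\langle\!\langle x_N,x_{N+1},\ldots\rangle\!\rangle=\{1\}$ for suitably chosen powers of a loxodromic element is essentially the Dehn filling theorem itself. So either you quote that theorem --- at which point your argument collapses onto the paper's, with the $T$-sequence formalism as unnecessary overhead and with the added nuisance of choosing the exponents $a_n$ so that $\langle g^{a_n}\rangle$ is normal in $E(g)$ --- or you must reprove a substantial portion of \cite{DGO}. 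As written, the crux is missing. A secondary point: the estimates of \cite{book} concern presentations satisfying condition R and are not the relevant tool here; the applicable small-cancellation theory over hyperbolically embedded subgroups is that of \cite{DGO}.
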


The proof of Theorem \ref{hypemb} is accomplished by proving that every acylindrically hyperbolic group is topologizable (see Lemma \ref{he}). Then Proposition \ref{Gd-simple} yields the claim.
Finally we sketch a possible application of Theorem \ref{hypemb} to
constructing non-discrete groups with all nontrivial elements conjugate
(for details and motivation see Section~5).

\paragraph{Acknowledgments.} We would like to thank Gabor Luk\'acs and the referee for useful comments.
\section{Hereditarily non-topologizable groups}

Recall that a subset $V$ of a group $G$ is called
\emph{elementary algebraic} if there exist
${a_1,\ldots, a_k \in G}$
and $\e_1, \ldots, \e_k\in \mathbb Z$ such that $V$ is the set of all
solutions of the equation
$
a_0x^{\e_1}a_1x^{\e_2} \cdots a_kx^{\e_k} =1
$
in $G$.

\begin{lem}[A.A. Markov \cite{Mar}]\label{Markov}
A countable group $G$ is non-topologizable if and only if $G\setminus \{1\}$
is a finite union of elementary algebraic sets.
\end{lem}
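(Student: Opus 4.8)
For the easy direction, I would argue as follows. Suppose $G\setminus\{1\}$ is a finite union of elementary algebraic sets. In any Hausdorff group topology the map $x\mapsto a_0x^{\varepsilon_1}a_1\cdots a_kx^{\varepsilon_k}$ is continuous, being assembled from multiplication and inversion; hence each elementary algebraic set, as the preimage of the closed point $\{1\}$, is closed. A finite union of closed sets is closed, so $G\setminus\{1\}$ is closed, i.e.\ $\{1\}$ is open. By homogeneity every singleton is then open and the topology is discrete. Thus no non-discrete Hausdorff group topology exists, so $G$ is non-topologizable.

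For the converse I would argue by contraposition: assuming $G\setminus\{1\}$ is \emph{not} a finite union of elementary algebraic sets, I construct a non-discrete Hausdorff group topology. Call a finite union of elementary algebraic sets \emph{algebraic}, and let $\mathfrak Z$ be the Zariski topology on $G$, whose closed sets are the arbitrary intersections of algebraic sets. Since each point is elementary algebraic ($\{g\}=\{x: g^{-1}x=1\}$), $\mathfrak Z$ is $T_1$, and by the continuity remark above it is coarser than every Hausdorff group topology. The first step is a combinatorial reduction: $G\setminus\{1\}$ is $\mathfrak Z$-closed if and only if it is itself algebraic. Indeed, if $G\setminus\{1\}=\bigcap_\alpha F_\alpha$ with each $F_\alpha$ algebraic, then each $F_\alpha$ is a superset of $G\setminus\{1\}$ and so equals either $G$ or $G\setminus\{1\}$; not all can equal $G$, so some $F_\alpha=G\setminus\{1\}$. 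Consequently the hypothesis says precisely that $\{1\}$ is \emph{not} $\mathfrak Z$-open.

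The main work is to upgrade this into an actual group topology, and here the obstacle is that $\mathfrak Z$ itself need not be a group topology: multiplication can fail to be continuous, because the product Zariski topology on $G\times G$ is strictly coarser than the topology defined by two-variable word equations (already the anti-diagonal $\{(x,y):xy=1\}$ is of the latter type but not of the former). The plan is therefore to build, using the countability of $G$, a first-countable group topology by hand. Enumerating $G=\{1,g_1,g_2,\dots\}$, I would construct a decreasing chain of neighborhoods of $1$,
\[
U_0\supseteq U_1\supseteq U_2\supseteq\cdots,
\]
each the complement of an algebraic set avoiding $1$, subject to $U_{n+1}U_{n+1}^{-1}\subseteq U_n$ and $g_iU_{n+1}g_i^{-1}\subseteq U_n$ for all $i\le n$, while arranging $g_n\notin U_{n+1}$ whenever $g_n\neq 1$. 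The inversion and conjugation requirements are cheap, since the family of elementary algebraic sets is stable under $x\mapsto x^{-1}$ and under $x\mapsto gxg^{-1}$ (these substitutions produce equations of the same shape). Such a chain serves as a neighborhood base at $1$ for a Hausdorff group topology — Hausdorff because $\bigcap_n U_n=\{1\}$ — which is non-discrete provided no $U_n$ equals $\{1\}$.

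The crux, and the step I expect to be genuinely hard, is exactly maintaining $U_n\neq\{1\}$ under the squaring constraint $U_{n+1}U_{n+1}^{-1}\subseteq U_n$. Meeting this constraint forces certain ``collision'' sets into the complement, and one must check that these sets remain algebraic and cannot exhaust $G\setminus\{1\}$; this is precisely where the hypothesis that $G\setminus\{1\}$ is not algebraic is consumed, guaranteeing enough room to keep every $U_n$ nontrivial. Once this bookkeeping is carried out along the enumeration of $G$ — handling one group element and one continuity requirement at each stage — the resulting topology is Hausdorff, non-discrete, and compatible with the group operations, so $G$ is topologizable, completing the contrapositive.
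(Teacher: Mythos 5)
The paper does not actually prove this lemma; it is quoted as Markov's theorem with a citation to \cite{Mar}, so there is no in-paper argument to compare against. Judged on its own, your first direction is complete and correct: in any Hausdorff group topology the word map $x\mapsto a_0x^{\e_1}a_1\cdots a_kx^{\e_k}$ is continuous, so each elementary algebraic set is closed, a finite union of them is closed, and if that union is $G\setminus\{1\}$ then $\{1\}$ is open and the topology is discrete. Your reduction in the converse is also sound: since any algebraic set containing $G\setminus\{1\}$ is either $G$ or $G\setminus\{1\}$, the set $G\setminus\{1\}$ is Zariski-closed if and only if it is itself a finite union of elementary algebraic sets, and your observations that elementary algebraic sets are stable under inversion and conjugation are correct.

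The converse itself, however, is not proved. Everything you write after ``the plan is therefore to build \dots a first-countable group topology by hand'' is a statement of intent, and the one step that carries the entire weight of Markov's theorem is exactly the one you defer: given $U_n$ (the complement of an algebraic set missing $1$, with $U_n\neq\{1\}$), you must produce an algebraic set $F_{n+1}$ not containing $1$, with $G\setminus F_{n+1}\neq\{1\}$, such that $(G\setminus F_{n+1})(G\setminus F_{n+1})^{-1}\subseteq U_n$. Unwinding this, you need to cover the two-variable algebraic set $\{(x,y): w(xy^{-1})=1\}$ (for each of the finitely many words $w$ defining $F_n$) by $(F_{n+1}\times G)\cup(G\times F_{n+1})$ with $F_{n+1}$ a one-variable algebraic set avoiding $1$ and not co-equal to $\{1\}$; it is not at all evident that the hypothesis ``$G\setminus\{1\}$ is not algebraic'' supplies this, and your text says only that ``this is precisely where the hypothesis \dots is consumed'' without exhibiting the mechanism. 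This passage from one-variable non-solvability data to the two-variable constraints imposed by continuity of multiplication is the genuinely hard combinatorial core of Markov's argument (and the place where countability and the failure of the analogous statement for uncountable groups enter), so as written the proposal establishes only the easy implication and an honest roadmap for the other.
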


We say that a group $G$ is given by a \emph{presentation over a free
product ${G_0*G_1*\dots}$} if $G$ is presented in the form
$$
G=(G_0*G_1*\dots)/\nc{R_1,R_2,\dots},
$$
where $\nc{R_1,R_2,\dots}$ denotes the minimal normal subgroup of the free product $G_0*G_1*\dots$ containing $R_1,R_2,\dots$. We are interested in presentations satisfying
\emph{condition \rm $R$} introduced
in
\cite{book}.
The exact definition of this condition is rather technical and will not be used in our paper. For our purpose it suffices to know that R is a condition on the additional relators
$R_1,R_2,\dots$, which allows one to apply the machinery from the book
\cite{book}.
One of these applications is the following theorem.

\begin{ObraztsovTh}
{\rm \cite{Ob89} (see also {\cite[Theorem 35.1]{book}})}.
There exists $N\in \mathbb N$ such that for any odd $n_0\ge N$ and any countable (finite or infinite) family of nontrivial countable
groups $G_0,G_1,\dots$ without elements of order $2$, there is an infinite simple group $O(G_0,G_1,\dots)$
such that the following conditions hold.
\begin{enumerate}
\item[(a)] $O(G_0,G_1,\dots)$ contains all $G_i$ as distinct maximal subgroups.
\item[(b)] Any two distinct maximal subgroups of
$O(G_0,G_1,\dots)$ intersect trivially.
\item[(c)] Any proper subgroup of $O(G_0,G_1,\dots)$ is either cyclic of order dividing $n_0$ or
conjugate to a subgroup of some $G_i$.
\end{enumerate}
Moreover, if our collection $G_0,G_1,\dots$ contains
at least two groups, we also have the following.
\begin{enumerate}
\item[(d)] $O(G_0,G_1,\dots)$ is generated by any pair of non-trivial elements $x,y$ satisfying $x\in G_i$ and $y\notin G_i$ for some $i$.
\item[(e)] $O(G_0,G_1,\dots)$ has a presentation
satisfying condition {\rm R} over the free product $G_0*G_1*\dots$.
\end{enumerate}
\end{ObraztsovTh}

The proof of the next lemma essentially uses the machinery developed in the book \cite{book}. Even brief definitions of all notions used below (condition R, reduced diagram, numerical parameters, etc.) would take many pages, so we choose not to explain them here and simply refer to \cite{book}.

\begin{lem}\label{anticommutator}
If a group $U$ has a presentation with condition {\rm R} over a free
product $G_0*G_1*\dots$, where $G_i$ are countable nontrivial
groups without elements of order $2$, then for any elements $g_0\in G_0\setminus\1$ and
$u\in U\setminus G_0$, the element $g_0g_0^u$ is not conjugate to any
element of any group $G_i$.
\end{lem}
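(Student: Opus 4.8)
The plan is to argue by contradiction, reducing the assertion to a computation in the free product $F=G_0*G_1*\cdots$ and then transporting that computation to the quotient $U=F/\nc{R_1,R_2,\dots}$ by means of the diagram machinery of \cite{book}. Throughout, write $g_0^u=u^{-1}g_0u$, so the element under study is $w=g_0\,u^{-1}g_0\,u$. Recall that in a free product an element is conjugate into one of the factors precisely when its cyclically reduced length is at most $1$. The whole point of condition $\mathrm R$ will be to guarantee that, for our particular $w$, this free-product dichotomy is not spoiled upon passing to $U$; so I first establish the free-product statement and then argue that no defining relator can interfere.

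First I would carry out the computation in $F$. Write $u=s_1\cdots s_p$ in normal form over $F$. Since $u\notin G_0$, the word $u$ is not a single $G_0$-syllable, so each of the two arcs of the cyclic word $w$ joining its two displayed copies of $g_0$ — namely $u^{-1}=s_p^{-1}\cdots s_1^{-1}$ on one side and $u=s_1\cdots s_p$ on the other — contains a syllable lying outside $G_0$. I claim the cyclically reduced length of $w$ over $F$ is at least $2$. Indeed, the two occurrences of $g_0$ are nontrivial elements of $G_0$; under cyclic reduction a $G_0$-syllable can only disappear by merging with an adjacent $G_0$-syllable into a nontrivial product, and the two copies of $g_0$ cannot merge with each other because they are separated on \emph{both} sides by syllables outside $G_0$, which cannot annihilate across a nontrivial factor element. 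Hence at least two distinct $G_0$-syllables survive in the cyclic normal form, so $w$ is not conjugate into any factor of $F$. (The hypothesis that the $G_i$ have no elements of order $2$ is not used here, but is needed to apply the machinery below.)

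It remains to promote this to $U$. Suppose, for contradiction, that the image of $w$ in $U$ is conjugate to some $h\in G_i$. Then there is a reduced annular diagram $\Delta$ over the given presentation, one boundary cycle of which is labelled by a cyclically reduced form of $w$ and the other by $h^{-1}$. I would analyze $\Delta$ using the contiguity estimates attached to condition $\mathrm R$ in \cite{book}: any essential $\mathrm R$-cell in a reduced diagram forces a long contiguity subdiagram, hence a long subword of the boundary label that almost coincides with a cyclic shift of some relator $R_j$. The decisive structural feature of $w$ is that its $G_0$-content occurs in two isolated copies of $g_0$ separated by the nontrivial $F$-word $u$, so no cyclic subword of $w$ can run alongside a relator for the length required. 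This excludes essential $\mathrm R$-cells, leaving a reduced annular diagram with no $\mathrm R$-cells at all, that is, a diagram over $F$. Its existence would mean $w$ is conjugate to $h$ already in $F$, contradicting the previous paragraph.

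The main obstacle is precisely the diagram analysis of the last paragraph: formalizing, in the language of \cite{book}, the intuition that the two well-separated copies of $g_0$ cannot be absorbed into the defining relators. This is the step at which condition $\mathrm R$ and the numerical small-cancellation parameters of \cite{book} genuinely enter, and where the standing assumption that the factors $G_i$ contain no involutions is invoked to secure the necessary contiguity bounds. Everything else — the normal-form computation and the reduction to the absence of $\mathrm R$-cells — is routine once the contiguity estimates are in place.
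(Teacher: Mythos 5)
There is a genuine gap, and it sits exactly where you locate the ``main obstacle'': your proposed diagram analysis over an annular diagram does not go through. Your annular diagram has one boundary component labelled by (a cyclically reduced form of) $w=g_0X^{-1}g_0X$, where $X$ is a word representing $u$; this word is arbitrary and can be very long. The contiguity estimates attached to condition $\mathrm R$ bound the perimeter of a cell in a reduced diagram in terms of the total length of the diagram's boundary (this is the content of the estimates in Sections 22--23 of \cite{book}, e.g.\ Lemma 23.16 and Theorem 22.2), so they exclude $\mathrm R$-cells only when the boundary is \emph{short}. Here the boundary contains the two long arcs labelled $X$ and $X^{-1}$, and nothing prevents a long subword of $X$ from running alongside a relator --- indeed $u$ could well be represented by a long piece of some $R_j$. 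Your structural claim that ``no cyclic subword of $w$ can run alongside a relator for the length required'' because the $G_0$-content of $w$ sits in two isolated letters is therefore unjustified: the contiguity arc need not involve those two letters at all.

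The paper's proof supplies precisely the missing idea. Starting from the annular conjugacy diagram for $g_0X^{-1}g_0X$ and $h$, one \emph{identifies} the two boundary arcs labelled $X$ and $X^{-1}$, obtaining a diagram $\Delta$ on a sphere with three holes whose boundary components are labelled by the single letters $g_0$, $g_0$ and $h$ --- each of length $1$ in the free-product metric --- together with a simple path whose label equals $X$ in $U$. After reduction, Theorem 22.2 of \cite{book} (extended to presentations over free products by Lemmas 33.3--34.2) forces any positive-rank reduced diagram on such a surface to have a boundary component of length $>\varepsilon n>1$; since all three components have length $1$, the rank is $0$ and the diagram lives over $G_0*G_1*\dots$. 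This yields a word $X'$, equal to $X$ in $U$, with $g_0g_0^{X'}$ conjugate into a factor of the free product, and only then does your (correct) normal-form computation apply to conclude $X'\in G_0$, hence $u\in G_0$, a contradiction. Without this cut-and-paste surgery reducing the boundary to length-one components, the small-cancellation machinery gives you no control, so your argument as written cannot be completed.
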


Henceforth, $g^h$ means $h^{-1}gh$ if $g$ and $h$ are elements of a group.

\begin{proof}
Suppose that $g_0g_0^u$ is conjugate to an element $h\in G_i$.
Let
${X\in G_0*G_1*\dots}$ be a word
(in the alphabet $G_0\cup G_1\cup\dots$)
representing $u$.

It follows
that there is a diagram of conjugacy $\Sigma $ of the word $g_0X^{-1}g_0X$
and
the letter $h$. That is, $\Delta $ is an annular van Kampen diagram over the presentation of $U$ with labels of the boundary components equal $g_0X^{-1}g_0X$ and $h$ (read in the appropriate direction). One can identify the subpaths of the
boundary of $\Sigma$ labeled by $X$ and $X^{-1}$ and obtain a diagram~$\Delta_0$ on a
sphere with $3$ holes. Its boundary components
$p_1$, $p_2$ and $q$
are
labeled (e.g., in clockwise manner) by the letters $g_0$, $g_0$, and
$h$, respectively. A simple path  $x$ labeled by $X$ connects the origins  of the
paths $p_1$ and $p_2$.

If $\Delta_0$ is not a reduced diagram, then one can make reductions
described in Section~13
of~\cite{book}
and obtain a reduced diagram
$\Delta$ with the same boundary labels. Moreover, there is a simple path
in $\Delta$ connecting the origins of $p_1$ and $p_2$, whose label is
equal to $X$ in~$U$.

We obtain a
reduced
diagram $\Delta$ on a sphere with 3 holes, but
the length of every its boundary component is equal to 1 (in the
metric of diagrams over free products). It follows that the rank of
$\Delta$ is 0. Indeed, Lemmas 33.3 -- 34.2
\cite{book}
extend the theory of presentations with the condition $R$ to the
presentations over free products; and so if $r(\Delta)>0$, then the length
of one of the boundary components of $\Delta$ must be $>\varepsilon n >1$
by Theorem 22.2.
Here $\epsilon$ and $n$ are some parameters from \cite{book} satisfying
$n\gg{1\over\epsilon}\gg1$; their exact values are not essential for us. (For more details about parameters see Section 4.)

Thus the word $X$ is equal in $U$ to a word $X'$ such that the
element $g_0g_0^{X'}$ is conjugate to an element of $G_i$ in the free
product $G_0*G_1\dots$. But this can happen only if
$X'$
represents an element of $G_0$. Hence $u\in G_0$.
This contradiction completes the proof.
\end{proof}

\begin{thm}\label{emb}
For any sufficiently large integer $n_0$ and any countable
nonempty family of nontrivial countable groups
$G_1,G_2,\dots$ without elements of order $2$, there
exists an infinite simple group
$O'(G_1,G_2,\dots)$ such that the following conditions hold.
\begin{enumerate}
\item[(a)] $O'(G_1,G_2,\dots)$  contains all $G_i$ as distinct maximal subgroups.
\item[(b)] Any two distinct maximal subgroups of
$O'(G_1,G_2,\dots)$  intersect trivially.
\item[(c)] Any proper subgroup of $O'(G_1,G_2,\dots)$  is either cyclic of order at most $n_0$ or
conjugate to a subgroup of some $G_i$.
\item[(d)] Any pair of non-trivial elements $x,y$ satisfying $x\in G_i$ and $y\notin G_i$ for some $i$ generates $O'(G_1,G_2,\dots)$.
\item[(e)] There exists an equation with
one unknown having precisely one non-solution in $O'(G_1,G_2,\dots)$. In particular, $O'(G_1,G_2,\dots)$ is non-topologizable.
\end{enumerate}
\end{thm}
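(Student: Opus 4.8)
The plan is to produce $O'$ by padding the given family with two auxiliary cyclic groups and then invoking Obraztsov's Theorem, the genuinely new content being property~(e), which I would extract from Lemma~\ref{anticommutator} by a commutator trick that isolates the identity as the unique non-solution. Fix a sufficiently large odd $n_0$ (at least the constant $N$ of Obraztsov's Theorem) and choose an odd integer $m$ with $1<m\le n_0$ and $m\nmid n_0$ (for large $n_0$ the least odd prime not dividing $n_0$ works). Let $A=\gp{a}$ and $B=\gp{b}$ be cyclic of order $m$; these are nontrivial, countable, and free of $2$-torsion. Applying Obraztsov's Theorem to the family $A,B,G_1,G_2,\dots$ (which has at least two members) yields an infinite simple group, which I set to be $O'=O'(G_1,G_2,\dots)$. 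Properties (a)--(d) are then a direct translation of Obraztsov's (a)--(d); the only point needing a remark is (c), where the conjugates of the extra factors $A,B$ are cyclic of order $m\le n_0$ and are thus absorbed into the clause ``cyclic of order at most $n_0$,'' exactly as Obraztsov's ``dividing $n_0$'' cyclic subgroups are.

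For (e) I would write $g^x=x^{-1}gx$ and $[u,v]=u^{-1}v^{-1}uv$, set $c=a^{2n_0}$ and $d=b^{2n_0}$, and consider
$$
W(x)=\bigl[(a\,a^x)^{n_0},\ (b\,b^x)^{n_0}\bigr].
$$
The claim is that $W(x)=1$ for every $x\ne 1$ while $W(1)\ne 1$. Since $m$ is odd and $m\nmid n_0$, the elements $c$ and $d$ are nontrivial. If $x\notin A$, then Lemma~\ref{anticommutator} (applied with the factor $A$) shows that $a\,a^x$ is conjugate into no factor, so by Obraztsov's (c) it has order dividing $n_0$ and $(a\,a^x)^{n_0}=1$; symmetrically $(b\,b^x)^{n_0}=1$ whenever $x\notin B$. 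On the other hand, for $x\in A$ one has $a^x=a$ (as $A$ is abelian), whence $(a\,a^x)^{n_0}=c$, and likewise $(b\,b^x)^{n_0}=d$ for $x\in B$. Because $A$ and $B$ are distinct maximal subgroups, $A\cap B=\1$ by (b). Hence for any $x\ne 1$ at least one of the two commutator entries is trivial, giving $W(x)=1$, while $W(1)=[c,d]$.

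It remains to see that $[c,d]\ne 1$. Here I would use the standard fact, which must be verified from (b) and (c), that the centralizer of a nontrivial element is the unique maximal subgroup containing it: for $c\in A\setminus\1$ this gives $C(c)=A$, and since $d\in B\setminus\1$ with $A\cap B=\1$ we get $d\notin C(c)$, so $[c,d]\ne 1$. Thus $W(x)=1$ has precisely one non-solution, namely $x=1$, which is (e). The ``in particular'' then follows from Markov's criterion (Lemma~\ref{Markov}): expanding the commutator, $W$ is a word of the form $a_0x^{\e_1}a_1\cdots x^{\e_k}$, so its solution set $O'\setminus\1$ is a single elementary algebraic set, and $O'$, being countable, is non-topologizable. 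The main obstacle is the bookkeeping behind (e): one must choose the auxiliary orders so that $c,d\ne 1$ (forcing $m\nmid n_0$) while keeping the added factors cyclic in order not to spoil (c); the real idea is the commutator, which converts the \emph{union} of the two ``bad sets'' $A$ and $B$ into their \emph{intersection} $A\cap B=\1$.
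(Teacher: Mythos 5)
Your proposal is correct and follows essentially the same route as the paper: pad the given family with auxiliary odd-order cyclic factor(s) whose order does not divide $2n_0$, invoke Obraztsov's Theorem for (a)--(d), and obtain (e) from the commutator equation $[(aa^x)^{n_0},(bb^x)^{n_0}]=1$, whose two arguments vanish off two trivially-intersecting maximal subgroups by Lemma~\ref{anticommutator} combined with Obraztsov's part (c). The only differences are cosmetic: the paper adjoins a single cyclic factor $G_0=\langle g\rangle$ and plays it against its conjugate $G_0^v$ where you use two factors $A$ and $B$, and the paper rules out $x=1$ via a maximality/metabelian argument where you use the equally valid (and arguably cleaner) observation that $C(c)=A$ by maximality of $A$ and triviality of the center.
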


\begin{proof}
Let us take a sufficiently large odd number $n_0$ as in Obraztsov's Theorem and a finite cyclic
group $G_0=\gp{g}$ of odd order coprime to $n_0$ (of order three, for
instance). Let
$$
O'(G_1,G_2,\dots)=O(G_0,G_1,G_2,\dots).
$$

Parts (a)-(d) of the theorem follow immediately from the corresponding parts of Obraztsov's theorem. Thus we only need to prove (e).
To construct an equation with precisely one non-solution
in
$O'(G_1,G_2,\dots)$, we take any element
$v\in O'(G_1,G_2,\dots)\setminus G_0$ and consider the equation
\begin{equation}\label{equat}
[(gg^x)^{n_0},(g^vg^{vx})^{n_0}]=1.
\end{equation}

Let us first show that the identity element is not a solution to this equation. Indeed assume that it is. Note that the equality $[g^{2n_0},g^{2n_0v}]=1$ implies $[g,g^v]=1$, because the order of $g$ is coprime to $2n_0$.  Thus the
group~$\gp{g,g^v}$ is abelian. In particular, $\gp{g,g^v}$ is a proper subgroup of $O'(G_1,G_2,\dots)$. Since $G_0=\gp g$ is maximal in $O'(G_1,G_2,\dots)$ by Obraztsov's theorem, we have $\gp{g,g^v}=\gp g$. This in turn implies that $\gp{g,v}$ is metabelian. Again $\gp{g,v}$ is a proper subgroup of $O'(G_1,G_2,\dots)$ (for example, because the latter group is simple) and hence  $\gp{g,v}=G_0$ by maximality of $G_0$. However this contradicts our choice of $v$.

We note that if an element $x$ does not belong to
$G_0$, then by Lemma~\ref{anticommutator} and part (e) of Obraztsov's theorem the element $gg^x$ is
not conjugate to an element of any $G_i$. Therefore, $gg^x$ generates
a cyclic subgroup of order dividing $n_0$ by Obraztsov's theorem. Hence for every $x\notin G_0$, the first argument of the commutator in (\ref{equat}) is $1$. For the same reason, the second argument of the commutator is $1$ whenever $x\notin (G_0)^v$. Observe that $G_0\cap (G_0)^v=\{ 1\}$, since both subgroups are maximal and distinct by Obraztsov's theorem. Thus all nonidentity
elements of~$O'(G_1,G_2,\dots)$ are solutions to (\ref{equat}). In particular, $O'(G_1,G_2,\dots)$ is non-topologizable
by Lemma~\ref{Markov}.
\end{proof}

\begin{rem}
It is known that
\begin{itemize}
\item
any group embeds into a non-topologizable
group \cite{Trof-emb};

\item
there exists a (non-topologizable)
torsion-free group  of any infinite cardinality
such that some equation has exactly one non-solution in this group
\cite{KT};

\item
there exists an infinite (non-topologizable)
group naturally isomorphic to its automorphism group
such that some equation has exactly one non-solution in this group
\cite{Trof-perf}.

\end{itemize}
\end{rem}

\begin{thm}\label{hnt}
There exist infinite hereditarily non-topologizable simple
torsion
groups $G$, $H$, $I$, and $J$ such that:
\begin{enumerate}
\item[\rm(a)] $G$ is
2-generated, quasi-cyclic,
and of bounded exponent;

\item[\rm(b)]
$H$ is
2-generated, quasi-cyclic,
and of unbounded exponent;

\item[\rm(c)]
$I$ is
not finitely generated, countable,
and of bounded exponent;

\item[\rm(d)]
$J$ is
uncountable
and of bounded exponent.
\end{enumerate}
\end{thm}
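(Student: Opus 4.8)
The plan is to derive all four groups from (iterated) Obraztsov constructions, organized by the elementary observation that \emph{a group is hereditarily non-topologizable as soon as it is simple, non-topologizable, and all of its proper subgroups are hereditarily non-topologizable}: for $H\le G$ and $N\lhd H$, if $H<G$ then $H/N$ is non-topologizable because $H$ already is hereditarily non-topologizable, while if $H=G$ then simplicity forces $N\in\{\1,G\}$ and $G/N\in\{G,\1\}$. Since finite groups are vacuously hereditarily non-topologizable, this reduces each case to two tasks: proving non-topologizability and controlling the isomorphism types of proper subgroups. Parts (a) and (b) then follow immediately from Theorem \ref{emb}: taking the input family to be a single $\Z_3$ gives $G=O'(\Z_3)$, and taking it to be $\{\Z_{p_i}\}_{i\ge 1}$ for infinitely many distinct odd primes gives $H=O'(\Z_{p_1},\Z_{p_2},\dots)$. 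By parts (a)--(d) each is an infinite simple $2$-generated group whose proper subgroups are all finite cyclic, i.e.\ a Tarski Monster, with $G$ of bounded and $H$ of unbounded exponent; by part (e) both are non-topologizable, so the reduction makes them hereditarily non-topologizable.

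For (c) and (d) I would erect a tower of Tarski-Monster-like groups. Fix a large odd $n_0$ and $G_0=\gp g\cong\Z_3$, put $T_1=O(G_0,C_1)$ with $C_1\cong\Z_3$, and recursively $T_{\alpha+1}=O(T_\alpha,C_{\alpha+1})$ with each $C_{\alpha+1}\cong\Z_3$ a fresh second factor, taking $T_\lambda=\bigcup_{\alpha<\lambda}T_\alpha$ at limits; stop at $I=\bigcup_{n<\omega}T_n$ for (c) and $J=\bigcup_{\alpha<\omega_1}T_\alpha$ for (d). Every factor has odd order dividing $3$, so all the $T_\alpha$ and the unions have odd exponent dividing $\operatorname{lcm}(n_0,3)$; hence they are torsion of bounded exponent, and Obraztsov's theorem applies at each successor stage. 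Since each $T_\alpha$ is a maximal subgroup of $T_{\alpha+1}$ the chains increase strictly, so $I$ is countable but not finitely generated and $J$ is uncountable. Simplicity of a union is automatic: a nontrivial normal subgroup meets some simple $T_\alpha$ nontrivially, hence contains it, hence contains every later $T_\beta$, hence the whole union.

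The crux is non-topologizability of the unions, which I would secure by propagating a \emph{single} equation up the tower. With $g$ as above and a fixed $v\in T_1\setminus G_0$, consider equation (\ref{equat}). As in the proof of Theorem \ref{emb}(e), the identity is a non-solution already in $T_1$ (using $G_0\cap G_0^{\,v}=\1$ and that $\gp{g,g^v}$ is non-abelian there), and an element $x\ne\1$ is a solution provided $(gg^x)^{n_0}=\1$ whenever $x\notin G_0$ and $(g^vg^{vx})^{n_0}=\1$ whenever $x\notin G_0^{\,v}$. The first I would establish by transfinite induction on $\alpha$ for all $x\in T_\alpha\setminus G_0$: at a successor stage the case $x\in T_\alpha$ is the inductive hypothesis, while for $x\in T_{\alpha+1}\setminus T_\alpha$ Lemma \ref{anticommutator}, applied to $T_{\alpha+1}=O(T_\alpha,C_{\alpha+1})$ with distinguished factor $T_\alpha\ni g$, shows $gg^x$ is conjugate into no factor and hence generates a cyclic group of order dividing $n_0$; limit stages are immediate. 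Conjugating by $v$ yields the second condition. Thus (\ref{equat}) has the identity as its unique non-solution throughout the union, so its solution set is the complement of $\1$; being the preimage of the closed point $\1$ under a continuous word map, this set is closed in any Hausdorff group topology, forcing $\1$ open and the topology discrete. For the countable $I$ this is Lemma \ref{Markov}; for the uncountable $J$, where that lemma is unavailable, the ``closed solution set'' argument applies unchanged.

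I expect the remaining point---that every proper subgroup is hereditarily non-topologizable---to be the main obstacle, particularly for $J$. The local input is that in $T_{\alpha+1}=O(T_\alpha,C_{\alpha+1})$ every \emph{infinite} proper subgroup is conjugate into the unique infinite factor $T_\alpha$, which is self-normalizing with pairwise trivially intersecting conjugates. Given a proper subgroup $K$, finite $K$ are hereditarily non-topologizable outright. If $K$ contains some $T_\beta$, take $\beta$ maximal with this property (it exists unless $K$ is the whole union); maximality of $T_\beta$ in $T_{\beta+1}$ then propagates upward to give $K=T_\beta$, which is hereditarily non-topologizable by the induction on the tower. Otherwise each $K\cap T_{\alpha+1}$ is proper in $T_{\alpha+1}$; if $K$ is infinite, choose $\alpha_0$ with $K\cap T_{\alpha_0}$ infinite (possible since an increasing chain of finite subgroups indexed by $\omega_1$ must stabilize), and for $\alpha\ge\alpha_0$ the inclusion $K\cap T_{\alpha+1}\subseteq T_\alpha^{\,t}$ forces $t\in T_\alpha$---otherwise $\1\ne K\cap T_\alpha\subseteq T_\alpha^{\,t}\cap T_\alpha=\1$---so that $K\cap T_{\alpha+1}=K\cap T_\alpha$. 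Continuity at limits then gives $K=K\cap T_{\alpha_0}\subseteq T_{\alpha_0}$, so every proper subgroup embeds in some $T_\alpha$ and is hereditarily non-topologizable. With the reduction of the first paragraph this makes $I$ and $J$ hereditarily non-topologizable, completing all four cases.
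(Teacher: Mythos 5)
Your proposal is correct and follows essentially the same route as the paper: parts (a) and (b) come from Theorem \ref{emb} applied to finite cyclic inputs, and parts (c) and (d) from a (transfinitely) iterated Obraztsov tower whose non-topologizability is controlled by Lemma \ref{anticommutator} and whose proper subgroups are pinned down via maximality and the trivial intersection of distinct maximal subgroups. The only deviations are cosmetic --- the paper feeds in $\Z_{n_0}$ and $\Z_{n_0+2k}$ rather than $\Z_3$ and $\Z_{p_i}$, and for the unions it uses the single equation $(gg^x)^{n_0}=1$, whose non-solution set is nonempty and contained in $G_0$, instead of propagating the commutator equation (\ref{equat}) --- while your classification of proper subgroups of $I$ and $J$ is, if anything, more detailed than the paper's.
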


\begin{proof}
Let $n_0$ be a sufficiently large odd integer as in Theorem \ref{emb}. Let
$$
G=O'(\Z_{n_0}),
\qquad
H=O'(\Z_{n_0},\Z_{n_0+2},\Z_{n_0+4},\dots),
\qquad
I=\bigcup_{i=0}^\infty G^i,
$$
where $G^0=G_0=\gp g$ is a finite cyclic group of odd
order coprime to $n_0$,
and $G^{i+1}=O(G^i,\Z_{n_0})$ for $i>0$.

The uncountable group $J$ (of the first uncountable cardinality) is
constructed similarly to~$I$ but using the transfinite induction
(up to the first uncountable ordinal).

The groups $G$ and $H$ are quasi-cyclic and simple by Theorem~\ref{emb}.
The groups $I$ and $J$ are simple, because they are unions of increasing
chains of simple subgroups.

All four groups are non-topologizable.
For $G$ and $H$, this follows directly from Theorem~\ref{emb};
$I$ and $J$ are non-topologizable, since
the set of
non-solutions of the equation~$(gg^x)^{n_0}=1$ is nonempty and
finite (it is contained in~$G_0$) by Lemma~\ref{anticommutator}.

Let us show that the groups $I$ and $J$ contain no proper infinite subgroups except for
subgroups conjugate to
$G^i$. Indeed let $P< I$ be a proper infinite subgroup. If $P\le G^k$ for some $k$, part (c) of Theorem \ref{emb} implies (by induction) that $P$ is conjugate to $G^i$ for some $i\le k$. Thus it suffices to rule out the case when for every $k\in \mathbb N$, $P$ contains an element that does not belong to $G^k$. Fix any $k_0\in \mathbb N$. There exists $k_1\ge k_0$ such that $P$ contains a non-trivial element $x\in G^{k_1}$. By our assumption there also exists $k_2> k_1$ such that $P$ contains an element $y\in G^{k_2}\setminus G^{k_2-1}$.  Now part (d) of Theorem \ref{emb} implies that the subgroup $\gp {x,y}$ coincides with $G^{k_2}$. Since $G^{k_0}\le G^{k_1}\le G^{k_2}$, we obtain that $G^{k_0}\le P$. As this holds true for any $k_0\in \mathbb N$, we have $P=I$, which contradicts properness of $P$. This completes the proof for $I$. For the group $J$, the proof is analogous but one has to use transfinite induction instead of the standard one.

Since $G^i$ is simple and non-topologizable for every $i$, it follows that $I$ and $J$ are hereditary
non-topologizable.
\end{proof}

\begin{rem}\label{not-c-comp}
Note that neither of the groups constructed in \cite{KT,Ols80,She} is
$c$-compact. Indeed it is immediate from the definition that
$c$-compactness is preserved by taking closed subgroups (i.e., any
subgroups in the discrete case). Recall that a discrete countable group is
$c$-compact if and only if it is hereditarily non-topologizable. Since
(discrete) groups from \cite{She} and \cite{KT} contain infinite cyclic subgroups,
they are not $c$-compact.

The countable non-topologizable group constructed in \cite{Ols80} is also
not hereditarily non-topologizable, since it has the free Burnside group
$B(m,n)$ with $m\ge 2$ generators and of large odd exponent $n$ as a
quotient. The latter group admits a non-discrete topology defined by a
nested chain of normal subgroup. This can be extracted from \cite[Theorem
39.3]{book}; for $m=2$ this also follows from  Corollary  \ref{fin}
applied to $J=\emptyset$. Alternatively one can argue as follows. By \cite[Theorem 39.1]{book} the
group $B(m,n)$ contains $B(\infty,n)$. Passing to the abelianization we
obtain a countably infinite sum of copies of $\mathbb Z/n\mathbb Z$, which
is obviously topologizable.
\end{rem}

\section{The space of marked groups and $G_\delta$ sets}

Let $F_k$ be the free group of rank $k$ with basis $X=\{x_1, \ldots,
x_k\}$ and let $\mathcal G_k$ denote the set of all normal subgroups of
$F_k$. Given $M,N\lhd F_k$, let
$$
\d(M,N)=\left\{
\begin{array}{ll}
\max
\left\{ \left.\frac1{|w|}\;\right|\; w\in N\vartriangle M
\right\},
& {\rm if} \; M\ne N\\
0,& {\rm if}\; M=N,
\end{array}
\right.
$$
where $|\cdot|$ denotes the word length with respect to the generating set
$X$. It is easy to see that $(\mathcal G_k, \d)$ is a compact Hausdorff
totally disconnected (ultra)metric space \cite{Gri}.

Note that one can naturally identify $\mathcal G_k$ with the set of all
\emph{marked $k$-generated groups}, i.e., pairs
$(G, (x_1,\ldots, x_k))$, where $G$ is a group and $(x_1, \ldots , x_k)$
is a generating $k$-tuple of $G$. (By abuse of notation, we keep the same
notation for the generators $x_1, \ldots , x_k$ of $F_k$ and their images
in $G$.)  For this reason the space $\mathcal G_k$ with the metric defined
above is called the \emph{space of marked groups with $k$ generators}. For
brevity, we simply call elements of $\mathcal G_k$ groups instead of
marked $k$-generated groups.

Let $\mathcal L_k$ be the first order language that contains the standard
group operations $\cdot$, $^{-1}$, the constant symbol $1$, and constant
symbols $x_1, \ldots , x_k$. Every element
$(G, (x_1, \ldots , x_k))\in \mathcal G_k$ can be naturally thought of as
an $\mathcal L_k$-structure.

The following lemma is obvious.

\begin{lem}\label{eq}
Let $w$ be a word in the alphabet $X\cup X^{-1}$. Then for every $k\in
\mathbb N$, the set of groups in $\mathcal G_k$ satisfying $w=1$
(or $w\ne 1$) is clopen.
\end{lem}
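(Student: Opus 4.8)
The plan is to exploit the explicit form of the ultrametric $\d$, which measures the closeness of two normal subgroups by the length of the shortest word on which they disagree. Identifying a point $N\in\mathcal G_k$ with the marked group $F_k/N$, the relation $w=1$ holds in $F_k/N$ exactly when $w\in N$ (viewing $w$ as an element of $F_k$). So the set in question is $A=\{N\lhd F_k : w\in N\}$, its complement being $\{N : w\notin N\}$, and it suffices to show that both sets are open.

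First I would record the key reformulation of the metric. For $M\ne N$ one has $\d(M,N)=1/\ell$, where $\ell$ is the length of a shortest element of $M\vartriangle N$. Consequently $\d(M,N)<1/|w|$ holds if and only if $\ell>|w|$, that is, no word of length $\le|w|$ lies in $M\vartriangle N$; equivalently, for every word $v$ with $|v|\le|w|$ one has $v\in M$ if and only if $v\in N$. In particular, any two groups at distance strictly less than $1/|w|$ agree on whether the fixed word $w$ (which has length exactly $|w|$) belongs to them.

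Now openness of both sets is immediate. If $N\in A$, consider the open ball of radius $1/|w|$ about $N$, namely $\{M : \d(M,N)<1/|w|\}$. Any $M$ in this ball agrees with $N$ on membership of $w$ by the previous step, and since $w\in N$ we get $w\in M$, so $M\in A$; hence $A$ is open. The identical argument applied to a group $N$ with $w\notin N$ shows that the complement $\{N : w\notin N\}$ is open as well. Therefore $A$ is clopen, and so is its complement, the set of groups satisfying $w\ne 1$. The argument contains no real obstacle: the only point requiring care is the correct unwinding of the maximum-of-reciprocals in the definition of $\d$ and the use of the strict inequality $\d(M,N)<1/|w|$, which is exactly what guarantees that the short word $w$ itself cannot lie in the symmetric difference.
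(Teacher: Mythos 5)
Your proof is correct and follows essentially the same route as the paper's: both arguments observe that two marked groups at distance less than $1/|w|$ must agree on the truth value of $w=1$, so each of the two sets is a union of open balls. Your version just spells out the unwinding of the ultrametric in terms of normal subgroups of $F_k$ in more detail than the paper does.
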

\begin{proof}
If $w=1$ (or $w\ne 1$) in a group
$(G, (x_1, \ldots , x_k))\in \mathcal G_k$ and $w$ has length $r$,
then $w=1$ (respectively, $w\ne 1$) in every other group
$(H, (x_1, \ldots , x_k))\in \mathcal G_k$ such that $\d (G,H)< 1/r$.
Thus the set of groups satisfying $w=1$ (respectively, $w\ne 1$) is open
and the claim of the lemma follows.
\end{proof}

Recall that a sentence in a first order language is called an
$\forall\exists$\emph{-sentence} if it has the form
\begin{equation}\label{AE}
\forall a_1\,\ldots\, \forall a_m \,
\exists b_1\,\ldots\, \exists b_n\;
\Phi (a_1, \ldots , a_m, b_1, \ldots, b_n),
\end{equation}
where $\Phi (a_1, \ldots , a_m, b_1, \ldots, b_n)$ is a quantifier-free
formula. If such a sentence only contains existential (respectively,
universal) quantifiers, it is called \emph{existential} (respectively,
\emph{universal}). We say that a subset $\mathcal S\subseteq \mathcal G_k$
is \emph{$\forall\exists $-definable} if there exists an
$\forall\exists$-sentence $\Sigma$ in $\mathcal L_k$ such that
$$
\mathcal S=\{ P\in \mathcal G_k\mid P \models \Sigma\},
$$
i.e., $\mathcal S$ is exactly the set of all elements of $\mathcal G_k$
satisfying $\Sigma$. Similarly we define \emph{existentially definable}
and \emph{universally definable} subsets.

Observe that if $(G, (x_1, \ldots , x_k))\in \mathcal G_k$, then we know
that $x_1, \ldots , x_k$ generate $G$. This allows us to use the following
quantifier elimination procedure. Let $R(u)$ be a (not necessarily first
order) property of marked $k$-generated groups which depends on some
parameter $u$ interpreted as a group element. Enumerate all words $\{
w_1,w_2,\ldots\}$ in the alphabet $X\cup X^{-1}$. Then we obviously have
\begin{equation}\label{elimA}
\{ P\in \mathcal G_k \mid P\models \forall u\, R(u)\} =
\bigcap\limits_{i=1}^\infty \{ P\in \mathcal G_k \mid P\models R(w_i)\}
\end{equation}
and
\begin{equation}\label{elimE}
\{ P\in \mathcal G_k \mid P\models \exists u\, R(u)\} =
\bigcup\limits_{i=1}^\infty \{ P\in \mathcal G_k \mid P\models R(w_i)\}.
\end{equation}

The first part of the following lemma is well-known although we were
unable to find an exact reference.

\begin{prop}\label{exist}
\begin{enumerate}
\item[\rm(a)]{\rm [Folklore]}\;
Every existentially defined subset of $\mathcal G_k$ is open.
\item[\rm(b)]
Every $\forall\exists $-definable subset of $\mathcal G_k$ is a
$G_\delta$ set.
\end{enumerate}
\end{prop}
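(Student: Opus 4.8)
The plan is to prove both parts using the quantifier elimination formulas \eqref{elimA} and \eqref{elimE} together with Lemma~\ref{eq}, reducing the topology of definable sets to Boolean combinations of clopen sets. First I would handle part~(a). Given an existentially defined subset $\mathcal S = \{P \mid P \models \exists b_1 \cdots \exists b_n\, \Phi\}$ with $\Phi$ quantifier-free, I apply the elimination formula \eqref{elimE} repeatedly, once for each existential quantifier, substituting words $w_{i_1}, \ldots, w_{i_n}$ in the alphabet $X \cup X^{-1}$ for the bound variables $b_1, \ldots, b_n$. This expresses $\mathcal S$ as a countable union, over all tuples of words, of sets of the form $\{P \mid P \models \Phi(w_{i_1}, \ldots, w_{i_n})\}$, where now $\Phi$ has no free variables (the constants $x_1, \ldots, x_k$ are interpreted as the generators). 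The remaining task is to check that each such set is open, indeed clopen. Since $\Phi$ is a quantifier-free formula in $\mathcal L_k$ with all variables replaced by concrete words, it is a Boolean combination of atomic formulas, each of which is an equation $u = v$ between words (equivalently $uv^{-1} = 1$). By Lemma~\ref{eq} each atomic condition $w = 1$ and its negation $w \ne 1$ defines a clopen set, and clopen sets are closed under finite Boolean operations, so $\{P \mid P \models \Phi(w_{i_1}, \ldots, w_{i_n})\}$ is clopen. A countable union of open sets is open, which gives part~(a).

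For part~(b), I would start from a set $\mathcal S = \{P \mid P \models \Sigma\}$ where $\Sigma = \forall a_1 \cdots \forall a_m\, \exists b_1 \cdots \exists b_n\, \Phi$. The idea is to peel off the quantifiers from the inside out. The inner existential block, viewed as a property $R(a_1, \ldots, a_m)$ of the free variables $a_1, \ldots, a_m$, defines for each fixed substitution of words an \emph{existentially definable} condition; by the argument of part~(a), each such inner set is open. It is convenient to first absorb the existential quantifiers: for any fixed words $w_{j_1}, \ldots, w_{j_m}$ substituted for $a_1, \ldots, a_m$, the set
\[
\{P \mid P \models \exists b_1 \cdots \exists b_n\, \Phi(w_{j_1}, \ldots, w_{j_m}, b_1, \ldots, b_n)\}
\]
is open by part~(a). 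Now I apply the universal elimination formula \eqref{elimA} once for each of the $m$ outer quantifiers, obtaining
\[
\mathcal S = \bigcap \{P \mid P \models \exists b_1 \cdots \exists b_n\, \Phi(w_{j_1}, \ldots, w_{j_m}, b_1, \ldots, b_n)\},
\]
where the intersection ranges over all $m$-tuples $(w_{j_1}, \ldots, w_{j_m})$ of words, a countable collection. Each set in the intersection is open, so $\mathcal S$ is a countable intersection of open sets, i.e., a $G_\delta$ set, as claimed.

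The step I expect to require the most care is justifying that the quantifier elimination formulas \eqref{elimA} and \eqref{elimE} are legitimate when the interior formula $R$ still contains quantifiers, so that nesting them is valid. The key point, already built into the setup, is that in every marked group $(G, (x_1, \ldots, x_k))$ the generators $x_1, \ldots, x_k$ generate $G$, so every element of $G$ is the value of some word in $X \cup X^{-1}$; hence quantifying over group elements is the same as quantifying over the (countable) set of words. This is exactly what \eqref{elimA} and \eqref{elimE} encode, and since the substitution $b_i \mapsto w_i$ produces a formula whose truth value in $P$ depends only on word identities that Lemma~\ref{eq} controls, the formulas remain valid after substituting words for the outer variables and leaving the inner quantifiers intact. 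The only genuine subtlety is bookkeeping: ensuring that after substituting words for the universally quantified variables, the resulting inner condition is still of the existential form handled in part~(a), which it is because substitution of constants does not introduce new quantifiers. Everything else is a routine appeal to the stability of clopen sets under finite Boolean combinations and the closure of open sets under countable unions and of $G_\delta$ sets under countable intersections.
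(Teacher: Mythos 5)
Your proof is correct and follows essentially the same route as the paper: Lemma~\ref{eq} makes atomic conditions clopen, the elimination formulas \eqref{elimE} and \eqref{elimA} convert existential quantifiers into countable unions and universal quantifiers into countable intersections, and part~(b) reduces to part~(a). The only cosmetic difference is that the paper first rewrites the quantifier-free matrix in disjunctive normal form, whereas you invoke closure of clopen sets under finite Boolean combinations directly; these are interchangeable.
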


\begin{proof}
Every existential sentence is equivalent to a sentence
\begin{equation}\label{sigma}
\exists b_1\,\cdots\, \exists b_n\; \Phi_1 (b_1, \ldots , b_n)\, \vee\,
\ldots \vee\, \Phi_q (b_1, \ldots , b_n),
\end{equation}
such that each $\Phi_i$ is a system of equations and inequations of the
form $w=1$ (respectively, $w\ne 1$), where $w$ is a word in the alphabet
$\{x_1^{\pm 1},\ldots,x_k^{\pm1}\}\cup\{b_1^{\pm 1},\ldots,b_n^{\pm 1}\}$.
Thus the first claim follows from Lemma \ref{eq} and the quantifier
elimination (\ref{elimE}) applied to all quantifiers in (\ref{sigma}). To
prove (b) we have to eliminate all universal quantifiers in (\ref{AE})
according to (\ref{elimA}) and apply~(a).
\end{proof}

It would be interesting to find other sufficient conditions
in the spirit of \cite{Mal} and  \cite{Cle} for a
(not necessarily first order) sentence to define a $G_\delta $ subset of
$\mathcal G_k$. In particular, we ask the following.
\begin{q}
Which second order sentences define $G_\delta $ subsets of $\mathcal G_k$?
\end{q}

The next proposition provides some particular non-trivial examples of
$G_\delta $ subsets of $\mathcal G_k$, which are relevant to our paper.
Recall that by a {\it Tarski Monster} we mean a finitely generated
infinite simple group with all proper subgroups finite cyclic.

\begin{prop}\label{Gd}
For every $k\in \mathbb N$, the following subsets of
$\mathcal G_k$ are $G_\delta$:
\begin{enumerate}
\item[\rm(a)] The set of all topologizable groups.
\item[\rm(b)] The set of all infinite groups.
\item[\rm(c)] The set of all groups satisfying a given identity.
\item[\rm(d)] The set of all simple groups.
\item[\rm(e)] The set of Tarski Monsters of any fixed finite exponent.
\item[\rm(f)] The set of groups with all non-trivial elements conjugate.
\end{enumerate}
\end{prop}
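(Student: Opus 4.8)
The plan is to realize each of the six sets, as far as possible, as the solution set of an $\forall\exists$-sentence in $\mathcal L_k$ and then quote Proposition \ref{exist}(b); the conditions that are not literally first order (normal closures, cyclicity, and Markov's covering criterion) will be reduced to this form through the quantifier-elimination identities (\ref{elimA})--(\ref{elimE}), which convert an unbounded product or membership statement into a countable union (open) or intersection (closed) of the clopen sets furnished by Lemma \ref{eq}. Since a countable intersection of $G_\delta$ sets is again $G_\delta$, it suffices to treat each part separately. Three of them are essentially immediate: for (c), satisfying a fixed identity is a universal sentence, a special case of (\ref{AE}); for (f), having all nontrivial elements conjugate is the $\forall\exists$-sentence $\forall a\,\forall b\,\exists h\,(a=1\vee b=1\vee h^{-1}ah=b)$; and for (b), the set of groups with at least $m$ elements is existentially definable, hence open by Proposition \ref{exist}(a), so the set of infinite groups is the countable intersection of these over all $m$, and thus $G_\delta$.

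For (d) I would write simplicity as $\forall g\,(g=1\vee \nc g=G)$, where $\nc g=G$ abbreviates the statement that each generator $x_l$ lies in the normal closure $\nc g$. After eliminating the universal quantifier via (\ref{elimA}), it suffices to show that for a fixed word $w$ the set of groups in which every $x_l$ is a product of conjugates of $w^{\pm1}$ is open: ``$x_l$ is such a product of a prescribed length with a prescribed sign pattern'' is a single word equation, clopen by Lemma \ref{eq}, and taking the union over all lengths and sign patterns (using (\ref{elimE})) keeps it open. For (e), I would intersect the $G_\delta$ sets coming from (b), (d), and (c) (with the identity $x^n=1$) with the set in which all proper subgroups are cyclic. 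Here I use that in a group of exponent dividing $n$ a subgroup is already cyclic once each of its $2$-generated subgroups is cyclic (a locally cyclic torsion group of exponent dividing $n$ embeds in the $n$-torsion of $\mathbb Q/\mathbb Z$, hence is cyclic). Thus this last condition becomes the $\forall\exists$-sentence $\forall a\,\forall b\,\bigl(\gp{a,b}=G\ \vee\ \exists c\,\bigvee_{0\le s,t<n}(a=c^s\wedge b=c^t)\bigr)$, with ``$\gp{a,b}=G$'' unfolded existentially exactly as the normal-closure condition in (d).

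The main obstacle is (a), which is not first order; its input is Markov's Lemma \ref{Markov}. Every $G\in\mathcal G_k$ is finitely generated, hence countable, so $G$ is non-topologizable if and only if $G\setminus\1$ is a \emph{finite} union of elementary algebraic sets. I would encode an elementary algebraic equation by finitary data, namely the exponents $\e_i\in\Z$ together with words representing the coefficients $a_i$; there are only countably many finite lists $\mathcal E=(E_1,\dots,E_r)$ of such data. For a fixed list the condition ``$\mathrm{Sol}(E_1)\cup\dots\cup\mathrm{Sol}(E_r)=G\setminus\1$'' splits into the clopen requirement that $x=1$ solve no $E_t$ and the universal requirement $\forall g\,(g=1\vee\bigvee_t E_t(g)=1)$; by (\ref{elimA}) and Lemma \ref{eq} the latter is a countable intersection of clopen sets, hence closed. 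Consequently the set of non-topologizable groups is the countable union over all lists $\mathcal E$ of closed sets, i.e.\ an $F_\sigma$ set, and its complement, the set of topologizable groups, is $G_\delta$.

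The two points that require genuine care are, first, the finiteness clause in Markov's criterion for (a): it is precisely what makes the existential range over coverings a countable union of closed sets rather than a single uncountable quantifier; and, second, the bookkeeping in (d) and (e) that lets membership in a normal closure or in a cyclic subgroup be expressed as an open condition via (\ref{elimE}). Everything else is a routine combination of Lemma \ref{eq}, Proposition \ref{exist}, and the stability of the classes $G_\delta$ and $F_\sigma$ under countable intersections and unions.
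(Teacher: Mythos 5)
Your proposal is correct and follows essentially the same route as the paper: part (a) rests on Markov's criterion together with an enumeration of finite lists of one-variable equations with word coefficients (you present the non-topologizable set as $F_\sigma$, while the paper directly exhibits its complement as a countable intersection of existentially definable open sets --- the same argument up to De Morgan), and parts (b)--(f) use the same quantifier-elimination reductions to Lemma \ref{eq} and Proposition \ref{exist}, including in (e) the same reduction of ``all proper subgroups cyclic'' to a condition on $2$-generated subgroups. No gaps.
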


\begin{proof}
Let $E=\{ E_{1}, E_{2}, \ldots \}$ denote the set of all finite
collections of equations over the free group $F_k$ with one unknown.
Given an element $(G, (x_1, \ldots, x_k))\in \mathcal G_k$ (i.e., an
epimorphism $\e\colon F_k\to G$), we can think of each $E_n$ as a
collection of equation over $G$ by projecting all coefficients to $G$ via
$\e$. Consider the following condition:
\begin{itemize}
\item[{$\bold C_n$}:]
\emph{Some nontrivial element of $G$ satisfies neither of the equations
from $E_{n}$ or $1$ satisfies at least one of the equations from $E_{n}$.}
\end{itemize}
Clearly every $\bold C_n$ can be expressed by an existential formula in
$\mathcal L_k$. Hence the set $\mathcal C_n$ of elements of
$\mathcal G_k$ satisfying $\bold C_n$ is open for every $n$ by
Proposition \ref{exist}. By Lemma \ref{Markov}, the set of all
topologizable groups in $\mathcal G_k$ coincides with
$\bigcap_{n\in \mathbb N} \mathcal C_n$ and hence it is a $G_\delta$ set
by definition.

To prove (b) we first observe that the set of all groups of order $\le m$
in  $\mathcal G_k$ is finite, hence the set $\mathcal I_m$ of groups
having more than $m$ elements is open. Consequently, the set of all
infinite groups is a $G_\delta$ set being the intersection of all
$\mathcal I_m$.

Part (c) follows from part (b) of Proposition \ref{exist} and the obvious
fact that the subset of $\mathcal G_k$ consisting of groups satisfying
a given identity can be defined by a universal sentence.

Let us prove (d). Fix some word $w$ in $X\cup X^{-1}$ and enumerate all
words $\{ u_1, u_2, \ldots \}$ in the normal closure of $w$ in $F_k$.
Observe that the property
\begin{itemize}
\item[$\bold D_w$:]
\emph{The normal subgroup of $G$ generated by $w$ is trivial or coincides
with $G$}
\end{itemize}
can be expressed by the (infinite) disjunction of formulas
\begin{equation}\label{normsub}
x_1=u_{i_1}  \,\&\,   \ldots \,\&\, x_k=u_{i_k}
\end{equation}
for $\{i_1, \ldots, i_k\} \in \mathbb N^k$ and $w=1$. Hence the set
$\mathcal D_w$ of elements of $\mathcal G_k$ satisfying $\bold D_w$ is the
union of open subsets of $\mathcal G_k$ by Lemma \ref{eq}. Consequently
$\mathcal D_w$ is open. It is easy to check that a group
$(G, (x_1, \ldots, x_k))\in \mathcal G_k$ is simple if and only if it
belongs to $\bigcap_{w\in F_k} \mathcal D_w$. Thus we obtain (d).

The proof of (e) is similar. Fix some $n\in \mathbb N$. For two words
$u,v$ in $X\cup X^{-1}$, consider the set $\mathcal S_{u,v}$ of all
$(G, (x_1, \ldots , x_k))\in \mathcal G_k$ such that the subgroup of $G$
generated by $\{ u,v\}$ is contained in a cyclic subgroup of order
dividing $n$. It is easy to see that $\mathcal S_{u,v}$ can be defined by
an existential formula in $\mathcal L_k$. E.g., for $n=2$ the following
formula works:
$$
\exists z\; (z^2=1\, \&\, ((u=1 \, \&\, v=1) \,\vee\, (u=1 \, \&\, v=z) \,\vee\, (u=z \, \&\, v=1)\,\vee\, (u=z \, \&\, v=z))).
$$
Thus $\mathcal S_{u,v}$ is open in $\mathcal G_k$.

Further let $\{ w_1, w_2, \ldots \}$ be the set of all elements of the
subgroup of $F_k$ generated by $u$ and $v$. For any
$(i_1, \ldots, i_k)\in
\mathbb N^k$, denote by $\mathcal R_{u,v,i_1,\ldots, i_k}$ the set of all
elements of $\mathcal G_k$ satisfying
$$
x_1=w_{i_1} \, \&\, \ldots \, \&\, x_k=w_{i_k}.
$$
By Lemma \ref{eq}, every
$\mathcal R_{u,v,i_1,\ldots, i_k}$ is also open. Thus the set
$$
\mathcal Q_{u,v}=
\mathcal S_{u,v} \cup
\left(
\bigcup\limits_{(i_1, \ldots, i_k)\in \mathbb N^k}
\mathcal R_{u,v,i_1,\ldots, i_k}
\right)
$$
is open and hence the set
$$
\mathcal T_0=\bigcap\limits_{u,v\in F_k} {\mathcal Q}_{u,v}
$$
is a $G_\delta $ set. It is easy to see that $\mathcal T_0$ has the property:
\begin{itemize}
\item[{\bf T}:]
\emph{For every $(G, (x_1, \dots, x_k))\in \mathcal T_0$, every
$2$-generated  subgroup of $G$ is either cyclic of order dividing $n$ or
coincides with $G$.}
\end{itemize}

Let
$$
\mathcal T=\mathcal T_0 \cap \mathcal I\cap
\mathcal S,
$$
where $\mathcal I$ is the set of all infinite groups
and $\mathcal S$ is
the set of all simple groups in $\mathcal G_k$. Then $\mathcal T$ is a
$G_\delta$ subset of $\mathcal G_k$. We want to show that $\mathcal T$ is
exactly the subset of all (marked $k$-generated) Tarski Monsters
satisfying the identity $x^n=1$.

Indeed suppose $(G, (x_1, \ldots , x_k))\in \mathcal T$. Let $H$ be a
proper subgroup of $G$. According to {\bf T} every $2$-generated subgroup
of $H$ is cyclic of order dividing $n$. This obviously implies that $H$
itself is cyclic of order dividing $n$. Note also that $G$ is infinite,
simple, and satisfies $x^n=1$ by the definition of $\mathcal T$.
Conversely, it is easy to see that every Tarski Monster satisfying the
identity $x^n=1$ belongs to $\mathcal T$.

Finally to prove (f) it suffices to note that the subset of $\mathcal G_k$
consisting of groups with $2$ conjugacy classes can be defined by the
$\forall\exists$-formula
$$
\forall x\, \forall y\,  \exists t\,
(x=1 \,\vee\, y=1 \,\vee\, t^{-1}xt=y).
$$
Now applaying part (b) of Proposition \ref{exist} finishes the proof.
\end{proof}

\section{Topologizable Tarski Monsters}

Our proof of Theorem \ref{ttm} makes use of a particular variant of the
general construction described in \cite[Sections 25-27]{book}. The variant
used here is similar to that from \cite[Section 39.2]{book}. Below we
briefly recall it and refer the reader to \cite{book} for details.

Given a group $G$ generated by a set $X$, we write ``$A\equiv B$" for two
words in the alphabet $X\cup X^{-1}$ if they coincide as words (i.e.,
letter-by-letter) and ``$A=B$ in $G$" if $A$ and $B$ represent the same
elements of $G$; by abuse of notation we identify words in $X\cup X^{-1}$
and elements represented by them. As in \cite{book}, given a word $A$ in
some alphabet, $|A|$ denotes its length.

The general construction in \cite[Sections 25-27]{book} uses a sequence of
fixed positive small parameters
$$
\alpha, \; \beta,\; \gamma,\; \delta, \;\e,\; \zeta,\; \eta,\; \iota.
$$
The exact relations between the parameters are described by a system of
inequalities, which can be maid consistent by choosing each parameter in
this sequence to be sufficiently small as compared to all previous
parameters. In \cite{book} and below, this way of ensuring consistency is
referred to as the \emph{lowest parameter principle}
(see \cite[Section 15.1]{book}).
Below we will use the following auxiliary parameters (which are assumed to be integers):
$$
h=\delta^{-1}, \; d=\eta^{-1},\; n=\iota^{-1} .
$$
We also fix a sufficiently large odd $n_0\in \mathbb N$ satisfying
\begin{equation}\label{n0}
n_0> \max\left\{ (h+1)n,\, \frac{h (d+n+2h-2)}{1-\alpha}\right\} .
\end{equation}

\begin{rem}
Our notation in this section is borrowed from \cite{book} and is different from the notation in the introduction: the exponent denoted by $n$ in Theorem \ref{ttm} is denoted by $n_0$ here.
\end{rem}

Given a subset $J\subseteq \mathbb N$, we construct groups $G(i,J)$ by
induction on $i\in \mathbb N$ as follows. Let $G(0,J)=F(a_1,a_2)$ be the
free group with basis $\{ a_1, a_2\}$. Suppose now that
$$
G(i-1,J)=\langle a_1,a_2\mid \mathcal R_{i-1}\rangle
$$
is already constructed for some $i\ge 1$, and that for each $1\le j\le
i-1$ we have already defined a set $\mathcal X_j$ of words of length $j$
in $\{a^{\pm 1}, b^{\pm 1}\}$ called \emph{periods of rank $j$}.

The set of periods of rank $i$, $\mathcal X_i$, is defined to be a maximal set of words of length $i$ in the alphabet $\{a^{\pm 1}, b^{\pm 1}\}$ such that no $A\in \mathcal X_i$ is conjugate to a power of a word of length $<i$ in the group $G(i-1,J)$, and if $A$ is conjugate to $B$ or $B^{-1}$ in
$G(i-1,J)$ for some $A,B\in \mathcal X_i$ then $A\equiv B$.

The group $G(i,J)$ is obtained from $G(i-1,J)$ by adding a set of
relations $\mathcal S_i$ constructed as follows. First for each period
$A\in \mathcal X_i$, $\mathcal S_i$ contains the relation
\begin{equation}\label{t1}
A^{n_0}=1
\end{equation}
called a \emph{relation of the first type of rank $i$}.

If $i\notin J$, no other relations are included in $\mathcal S_i$. If
$i\in J$, then for each $A\in \mathcal X_i$ we fix some maximal set of
words $\mathcal Y_A$ such that:
\begin{enumerate}
\item[\rm(a)]
For any $T\in \mathcal Y_A$, we have  $1\le |T|\le d|A|$;
\item[\rm(b)]
Every double coset $\langle A\rangle g\langle A\rangle$ in $G(i-1,J)$
contains at most one word from $\mathcal Y_A$ and this word has minimal
length among all words representing elements of
$\langle A\rangle g\langle A\rangle$  in $G(i-1,J)$.
\end{enumerate}
If $a_1\notin \langle A\rangle$ in $G(i-1)$, then for every $T\in \mathcal
Y_A$ such that $T\notin \langle A\rangle a_1\langle A\rangle $ in $G(i-1,J)$, we add the
relation
\begin{equation}\label{t21}
a_1A^nTA^{n+2}\ldots TA^{n+2h-2}=1
\end{equation}
to the set $\mathcal S_i$. Further if
$a_2\notin  \langle A\rangle\cup \langle A\rangle a_1\langle A\rangle$
and $T\notin \langle A\rangle a_2\langle A\rangle$ in $G(i-1,J)$, then we
also add the relation
\begin{equation}\label{t22}
a_2A^{n+1}TA^{n+3}\ldots TA^{n+2h-1}=1
\end{equation}
to $\mathcal S_i$. These relations are called \emph{relations of the
second type of rank $i$}.

Finally we define
$$
G(i,J)=\langle a_1,a_2 \mid \mathcal R_{i-1}\cup \mathcal S_i\rangle.
$$
Note that there is some freedom in choosing periods in every rank and sets
$\mathcal Y_A$.  We additionally require our construction to be
\emph{uniform} in the following sense: if $I\cap [1,r]=J\cap[1,r]$ for
some $r\in \mathbb N$, then the sets of periods and the corresponding sets
$\mathcal Y_A$ in $G(i,I)$ and $G(i,J)$ coincide for all $1\le i\le r$. In
particular, $G(i,I)$ and $G(i,J)$ have the same relations for all
$1\le i\le r$.

Let $G(\infty, J)$ denote the
limit
group of the sequence $G(0,J)\to  G(1,J)\to  \ldots $. That is,
$$
G(\infty, J)=\left\langle a_1, a_2\;
\left|\; \bigcup\limits_{i=1}^\infty \mathcal S_i\right.\right\rangle .
$$
The presentations of $G(i,J)$, $i\in \mathbb N\cup\{ \infty\} $,
constructed above will be called {\it canonical}.

\begin{rem}
In our notation, the groups $G(i,j)$ constructed in
\cite[Section 39.2]{book} are exactly $G (i, \{j+1, j+2, ...\})$.
\end{rem}

We will need analogues of Lemma 39.5 and Lemma 39.6 from \cite{book}.
Recall that the condition $R$ is a technical condition which allows to
apply the techniques developed in \cite[Sections 25-27]{book}. For the
definition, we refer to \cite[Section 25]{book}.

\begin{lem} \label{Gij}
\begin{enumerate}
\item[\rm(a)]
For every $i\in \mathbb N$ and $J\subseteq \mathbb N$, the presentation of
the group $G(i,J)$ constructed as above satisfies the condition $R$.

\item[\rm(b)]
For every $J\subseteq \mathbb N$, the group $G(\infty, J)$ is infinite and
torsion of exponent $n_0$.

\item[\rm(c)]
If $J$ contains all but finitely many natural numbers, then every
proper subgroup of $G(\infty,J)$ is cyclic of order dividing
$n_0$.
\end{enumerate}
\end{lem}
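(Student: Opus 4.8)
The plan is to treat the three parts in order, regarding the entire construction as a graded presentation in the sense of \cite[Sections 25--27]{book} and reducing each assertion to its counterpart in \cite[Section 39.2]{book}. Since the groups $G(i,j)$ of the book coincide with our $G(i,\{j+1,j+2,\dots\})$, the only genuinely new feature is that the set of ranks carrying relations of the second type is an arbitrary $J\subseteq \mathbb N$ rather than a terminal segment. The uniformity condition imposed on the construction guarantees that the periods and the sets $\mathcal Y_A$ depend only on the initial segment $J\cap[1,i]$, so all verifications can be carried out rank by rank.

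First I would establish (a). Condition $R$ is a rank-local smallness requirement on the relators, and its verification in \cite{book} proceeds by induction on the rank, treating at each stage either a rank that contributes only relations of the first type or a rank that also contributes relations of the second type. In our setting the rank $i$ is of the first kind exactly when $i\notin J$ and of the second kind exactly when $i\in J$; both cases already occur verbatim in the book's construction, and the inductive estimates (culminating in Theorem 22.2 and extended to presentations over free products by Lemmas 33.3--34.2 of \cite{book}) never use the position of $i$ relative to a fixed threshold. Hence the same computation shows that $G(i,J)$ satisfies condition $R$ for every $i$ and every $J$. This step is the technical heart of the lemma.

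Granting (a), part (b) follows from the general theory of graded presentations with condition $R$. Every element of $G(\infty,J)$ is conjugate to a power of some period, and each period $A$ satisfies $A^{n_0}=1$ by a relation of the first type; conversely, condition $R$ together with Theorem 22.2 shows that no proper power $A^m$ with $0<m<n_0$ is trivial, so the exponent equals $n_0$. Infiniteness is obtained as in the book from the fact that infinitely many periods of distinct ranks survive as pairwise non-conjugate nontrivial elements, which again rests only on condition $R$ and the length estimate of Theorem 22.2; none of this uses any hypothesis on $J$.

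Finally, for (c) I would assume that $J$ is cofinite and reproduce the argument of \cite[Lemma 39.6]{book}. The relations (\ref{t21}) and (\ref{t22}) of the second type are designed so that, for a period $A$ of a rank in $J$, the generators $a_1,a_2$ lie in the subgroup generated by $A$ together with a representative of each nontrivial double coset $\langle A\rangle g\langle A\rangle$; consequently a subgroup containing such a period and an element outside $\langle A\rangle$ must be all of $G(\infty,J)$. Since $J$ omits only finitely many ranks, every sufficiently long period carries the second-type relations, and a standard argument then shows that a proper subgroup cannot contain a period together with an independent element. It follows that every proper subgroup is cyclic, and by the exponent bound from (b) its order divides $n_0$. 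The main obstacle throughout is part (a): once condition $R$ is verified for arbitrary $J$, parts (b) and (c) are formal consequences of the machinery of \cite{book}, exactly as Lemmas 39.5 and 39.6 are derived there.
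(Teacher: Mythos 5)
Your proposal follows essentially the same route as the paper: part (a) is reduced to the proof of Lemma 27.2 of \cite{book}, part (b) to Theorems 26.1 and 26.2 there, and part (c) to Lemma 39.6 with $j$ replaced by $\max(\mathbb N\setminus J)$, the key observation being (as you note) that for cofinite $J$ all second-type relations of sufficiently large rank are imposed. The one point you pass over is the one the paper singles out as the only genuine difference from the book: here $n$ and $n_0$ are chosen independently, with $n_0$ subject only to (\ref{n0}), rather than via $n=[(h+1)^{-1}n_0]$ as in \cite{book}, so one must check that the proof of Lemma 27.2 really uses only the inequality $(h+1)n\le n_0$, which (\ref{n0}) guarantees. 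Your claim that the book's verification applies \emph{verbatim} is therefore slightly too strong, but this is a small, easily closed omission rather than a flaw in the approach.
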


\begin{proof}
The proof of the first statement almost coincides with the proof of
Lemma 27.2 in \cite{book}. The only difference is that in our construction
we choose $n_0$ to satisfy (\ref{n0}), while in \cite{book} one takes
$n_0$ such that $n=[(h+1)^{-1}n_0]$. However the latter equality is not
essential for the proof of Lemma 27.2. What is really used there is the
inequality $(h+1)n\le n_0$ (see the last line of the proof), which follows
from (\ref{n0}).

Now part (a) allows us to apply Theorems 26.1 and 26.2 from \cite{book},
which yield (b). Finally the proof of (c) repeats the proof of \cite[Lemma
39.6]{book} verbatim after replacing $G(\infty,j)$ with $G(\infty,J)$, and
$j$ with $\max (\mathbb N\setminus J)$. The key point here is that all
relations of the second type of rank $>\max (\mathbb N\setminus J)$ are
imposed in $G(\infty,J)$.
\end{proof}

In the next lemma, we could replace ``arbitrary large" with ``every".
However the weaker statement is sufficient for our goals.

\begin{lem}\label{periods}
For any $J\subseteq \mathbb N$, there exist periods of arbitrary large
rank. That is, for every $r\in \mathbb N$, the set of periods
$\mathcal X_i$ is non-empty for some $i> r$.
\end{lem}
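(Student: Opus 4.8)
The statement claims that for every $J\subseteq\mathbb N$, periods of arbitrarily large rank exist. Let me think about what could go wrong and how to prove it.

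The set $\mathcal X_i$ of periods of rank $i$ is a maximal set of words of length $i$ in $\{a^{\pm1},b^{\pm1}\}$ subject to: no $A\in\mathcal X_i$ is conjugate in $G(i-1,J)$ to a power of a shorter word, and distinct elements of $\mathcal X_i$ are non-conjugate (up to inverse). So $\mathcal X_i=\varnothing$ precisely when *every* word of length $i$ is conjugate in $G(i-1,J)$ to a power of a shorter word. The danger is that at some rank the group $G(i-1,J)$ has become so degenerate that all length-$i$ words collapse onto shorter periods, and then — since no new periods appear — the group stabilizes and stays finite forever. But by Lemma \ref{Gij}(b) the group $G(\infty,J)$ is infinite of exponent $n_0$. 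That is the key leverage.

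The plan is to argue by contradiction. Suppose there is an $r$ such that $\mathcal X_i=\varnothing$ for all $i>r$. First I would observe that if $\mathcal X_i=\varnothing$, then no relations of rank $i$ are imposed (relations of both the first and second type are indexed by periods $A\in\mathcal X_i$), so $\mathcal S_i=\varnothing$ and hence $G(i,J)=G(i-1,J)$. Thus if periods vanish for all ranks $>r$, the sequence stabilizes: $G(\infty,J)=G(r,J)$. Now $G(r,J)$ is a group defined by finitely many relations added over finitely many ranks, each rank contributing finitely many periods (there are only finitely many words of each length) and finitely many relations of the second type; so $G(r,J)$ is a finitely presented group. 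I would then derive a contradiction with infiniteness: the emptiness of $\mathcal X_i$ for all $i>r$ means every word $A$ of length $>r$ is conjugate in $G(r,J)$ to a power of a word of length $\le r$, so $G(r,J)$ is generated (up to conjugacy) by the finitely many elements represented by words of length $\le r$, each of which has finite order dividing $n_0$ by Lemma \ref{Gij}(b); combined with the fact that there are only finitely many conjugacy classes arising this way, this forces $G(r,J)$ to be finite, contradicting Lemma \ref{Gij}(b).

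The main obstacle is making the last collapse argument precise: "every long word is conjugate to a power of a short word" must be converted into genuine finiteness. The cleanest route, which I would take rather than counting conjugacy classes directly, is to invoke the machinery behind Lemma \ref{Gij}: the whole theory of \cite{book} shows that in these graded presentations the periods are exactly the words that index the essential relations, and a word that is not conjugate to a shorter power generates (via its period structure) genuinely new elements. I would appeal to the relevant counting/growth results from \cite[Sections 25--27]{book} — specifically that the number of elements represented by words of length $\le i$ in $G(i,J)$ grows without bound precisely because new periods keep appearing — so that stabilization of the period sets would cap the group's cardinality. The honest way to present this is: if $\mathcal X_i=\varnothing$ for all $i>r$ then $G(\infty,J)=G(r,J)$ is finite (every element is conjugate to a power of one of the finitely many words of length $\le r$, all of finite order), directly contradicting Lemma \ref{Gij}(b). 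I expect the delicate point to be ensuring that "conjugate to a power of a shorter word" really does reduce the count enough to give finiteness, and I would lean on the structural results of \cite{book} to certify this rather than reprove it.
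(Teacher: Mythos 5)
Your opening reduction is correct and is implicitly the same one the paper uses: since every relation of rank $i$ is indexed by a period in $\mathcal X_i$, the hypothesis $\mathcal X_i=\emptyset$ for all $i>r$ gives $G(\infty,J)=G(r,J)$, so it suffices to show $G(\infty,J)\ne G(r,J)$. The gap is in how you try to derive the contradiction. From the emptiness of the period sets you correctly get that every element of $G(r,J)$ is conjugate to a power of one of the finitely many words of length $\le r$, each of order dividing $n_0$; but the jump from ``finitely many conjugacy classes, all torsion'' to ``$G(r,J)$ is finite'' is not valid. An infinite group can have finitely many conjugacy classes of elements all of bounded order (indeed, the very machinery of \cite{book} produces infinite bounded-exponent groups with strong conjugacy collapse), so cardinality is the wrong invariant to contradict, and your fallback appeal to growth estimates ``because new periods keep appearing'' is circular, since the appearance of new periods is exactly what is to be proved.

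The paper instead contradicts the \emph{exponent} statement of Lemma \ref{Gij}(b): by \cite[Lemma 4.6]{book} there is a $6$-aperiodic word $X$ of length at least $20r$, and a reduced-diagram argument (via \cite[Theorem 22.2]{book}) shows that $X^{n_0}=1$ in $G(r,J)$ would force the cyclic word $X^{n_0}$ to contain a subword $A^{20}$ with $|A|\le r$, contradicting aperiodicity. Hence $X^{n_0}\ne 1$ in $G(r,J)$ while $X^{n_0}=1$ in $G(\infty,J)$, so the two groups differ and a period of rank $>r$ must exist. Note that this argument also shows your target statement is unprovable as stated: $G(r,J)$ is genuinely infinite (it even contains elements of order not dividing $n_0$), so no correct argument can conclude that it is finite. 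To repair your plan you would have to replace ``finite'' by ``of exponent dividing $n_0$'' and then exhibit an element of $G(r,J)$ violating that exponent, which is precisely the aperiodic-word and van Kampen diagram input your proposal is missing.
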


\begin{proof}
We repeat the main argument from the proof of
\cite[Theorem 19.3]{book} with obvious changes. Fix some $r\in \mathbb N$.
By \cite[Lemma 4.6]{book} there exists a $6$-aperiodic word $X$ in the
alphabet $\{ a_1, a_2\}$ of length at least $20r$.  Assume first that
$X^{n_0}=1$ in $G(r, J)$. Arguing as in the second paragraph of the proof
of \cite[Theorem 19.1]{book} (and replacing the reference to
\cite[Theorem 16.2]{book} there with the reference to
\cite[Theorem 22.2]{book}) we conclude that the cyclic word~$X^{n_0}$
contains a subword of the form $A^{20}$ for some non-trivial $A$ of length
at most $r$. Since the length of $X$ is greater than $20r$, this
contradicts the assumption that $X$ is $6$-aperiodic. This contradiction
shows that $X^{n_0}\ne 1$ in $G(r, J)$.  In particular, we have $G(\infty,
J)\ne G(r, J)$  as $X^{n_0}=1$ in $G(\infty, J)$ by part (b) of Lemma
\ref{Gij}. Therefore periods of rank $>r$ exist.

\end{proof}

Every group $G(i,J)$ comes with a natural generating set, namely the image
of $\{ a_1, a_2\} $ under the natural homomorphism $F_2\to G(i,J)$. By
abuse of notation we denote the image of $\{a_1, a_2\}$ in $G(i,J)$,
$i\in \mathbb N\cup \{\infty\}$, by $\{a_1, a_2\}$ as well. In what
follows we say that a homomorphism $\e \colon G(\infty,I)\to G(\infty,J)$
is {\it natural} if $\phi(a_1)=a_1$ and $\phi(a_2)=a_2$.

\begin{lem}\label{nathom}
Let $J\subseteq \mathbb N$ and let $I=J\cap [1,r]$ for some
$r\in \mathbb N$. Then the following hold:
\begin{enumerate}
\item[\rm(a)]
There exists a natural homomorphism
$\e \colon G(\infty, I)\to G(\infty, J)$.
\item[\rm(b)]
${\rm Ker}\, \e$
does not contain nontrivial elements of
$G(\infty, I)$ of length $\le r$ with respect to the generating set
$\{a_1,a_2\}$.
\end{enumerate}
\end{lem}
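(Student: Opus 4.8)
The plan is to derive (a) from the uniformity of the construction together with the fact that $G(\infty,J)$ has exponent $n_0$, and to derive (b) by showing that relations of rank $>r$ are too long to kill words of length $\le r$, so that short words are already detected at rank $r$, where the presentations for $I$ and $J$ agree.

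For part (a): since $I=J\cap[1,r]\subseteq[1,r]$, no integer $i>r$ lies in $I$; hence for every such $i$ the set $\mathcal S_i$ in the canonical presentation of $G(\infty,I)$ consists only of relations of the first type $A^{n_0}=1$. By Lemma \ref{Gij}(b) the group $G(\infty,J)$ has exponent $n_0$, so each such relation automatically holds in $G(\infty,J)$. For $i\le r$, the uniformity of the construction (applicable because $I\cap[1,r]=J\cap[1,r]$) guarantees that the periods $\mathcal X_i$ and the sets $\mathcal Y_A$ — and therefore the relations of rank $i$ — coincide for $I$ and $J$, so these relations also hold in $G(\infty,J)$. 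Thus every defining relator of $G(\infty,I)$ is trivial in $G(\infty,J)$, and the assignment $a_1\mapsto a_1$, $a_2\mapsto a_2$ extends to a natural homomorphism $\e\colon G(\infty,I)\to G(\infty,J)$.

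For part (b): I would first record that uniformity gives an identification $G(r,I)=G(r,J)$, since the two canonical presentations coincide up to rank $r$. The claim then reduces to the following: if $|w|\le r$ and $w=1$ in $G(\infty,J)$, then $w=1$ in $G(r,J)$. Granting this, let $g\in{\rm Ker}\,\e$ be represented by a word $w$ with $|w|\le r$. Then $w=1$ in $G(\infty,J)$, so by the claim $w=1$ in $G(r,J)=G(r,I)$; composing with the natural surjection $G(r,I)\to G(\infty,I)$ yields $w=1$ in $G(\infty,I)$, i.e. $g=1$. Hence ${\rm Ker}\,\e$ contains no nontrivial element of length $\le r$.

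It remains to prove the claim, and this is the main obstacle, where the machinery of \cite{book} is essential. Suppose for contradiction that $w\ne1$ in $G(r,J)$. Since $w=1$ in $G(\infty,J)=\varinjlim_i G(i,J)$, there is a least $s$ with $w=1$ in $G(s,J)$, and $s>r$ by assumption. Working over $G(s,J)$, whose presentation satisfies condition $R$ by Lemma \ref{Gij}(a), I would take a reduced van Kampen diagram $\Delta$ with boundary label $w$; by minimality of $s$ it must contain a cell of rank $s$. The Greendlinger--type lemma for graded presentations satisfying condition $R$ (see \cite[Theorem 22.2]{book} and \cite[Sections 25--27]{book}) then furnishes a cell $\Pi$ of rank $s$ having a contiguity subdiagram to $\partial\Delta$ whose arc on $\partial\Delta$ has length exceeding a fixed positive proportion of $|\partial\Pi|$. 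Since a relator of rank $s$ has length at least $n_0 s$, this arc — a subword of $w$ — has length greater than $r$ once $s>r$, because $n_0$ is large relative to the parameters by the lowest parameter principle. This contradicts $|w|\le r$, so no such $\Delta$ exists and $w=1$ in $G(r,J)$. The delicate points I would handle by quoting \cite{book} rather than reproving them are the precise form of the Greendlinger--type conclusion and the bookkeeping of the contiguity estimate.
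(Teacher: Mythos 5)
Your proof is correct and follows essentially the same route as the paper: part (a) via uniformity for ranks $\le r$ plus the fact that exponent $n_0$ (Lemma \ref{Gij}(b)) absorbs all first-type relations of rank $>r$ regardless of how the periods were chosen, and part (b) via the observation that relators of rank $>r$ are too long to kill words of length $\le r$ — the paper gets this by citing \cite[Lemma 23.16]{book} directly for $\ker\e$, while your detour through $G(r,I)=G(r,J)$ followed by a sketch of the Greendlinger-type estimate is only a cosmetic reorganization of the same argument. One small inaccuracy: a relator of rank $s$ need not have length $\ge n_0 s$, since second-type relations have length on the order of $s\cdot(2h-1)n$ (plus the $T$-segments), which may be smaller; the correct bound, as in the paper, is $(r+1)\min\{n_0,(2h-1)n\}>rn$, and this still yields your conclusion by the lowest parameter principle.
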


\begin{proof}
We first note that claim (a) is not obvious as, in general, the set of defining
relations in the canonical presentation of $G(\infty, I)$ is not a subset
of the set of relations in the canonical presentation of $G(\infty, J)$.
However it is possible to construct other presentations of $G(\infty, I)$
and $G(\infty, J)$ for which this is the case.

Let $\mathcal R_I$ and $\mathcal R_J$ be the sets of relations of the
second type in the canonical presentations of $G(\infty, I)$ and
$G(\infty, J)$, respectively. By uniformness of our construction, we have
$\mathcal R_I\subseteq \mathcal R_J$. Since both $G(\infty, I)$ and
$G(\infty, J)$ are torsion of exponent $n_0$ by part (b) of Lemma
\ref{Gij} and all relations of the first type have the form $X^{n_0}=1$
for some word $X$ in the alphabet $\{ a^{\pm 1}_1, a^{\pm }_2\}$, we can
represent the groups $G(\infty, I)$ and $G(\infty, J)$ as follows:
$$
G(\infty, I)=\langle a_1,a_2\mid \mathcal R_I, \; X^{n_0} =1 \;
\forall X\rangle
$$
and
$$
G(\infty, J)=\langle a_1,a_2\mid \mathcal R_J, \; X^{n_0} =1 \;
\forall X\rangle,
$$
where the relations $X^{n_0}=1$ are imposed for all words $X$ in
$\{a^{\pm 1}_1, a^{\pm }_2\}$. Now part (a) of the lemma becomes obvious.

Further part (a) of Lemma \ref{Gij} allows us to apply Lemma 23.16 from
\cite{book}, which implies that every nontrivial element from ${\rm Ker\,}
\e$ has length at least $(1-\alpha)$ times the minimal possible length of
a relator of rank $>r$. It is easy to see from (\ref{t1})-(\ref{t22}),
that the length of every relator of rank $>r$ is at least $(r+1) \min \{
n_0, (2h-1)n\}>rn$. By the lowest parameter principle we can assume that
$(1-\alpha)n>1$. Hence every nontrivial element from ${\rm Ker}\, \e$ has
length at least $r$.
\end{proof}

In what follows we think of $G(\infty,J)$ (or, more precisely, $(G(\infty,
J), \{ a_1, a_2\})$) as an element of $\mathcal G_2$. Let $\mathcal T$ be
the subspace of $\mathcal G_2$ consisting of $G(\infty, J)$ for all
$J\subseteq \mathbb N$. To apply the Baire Theorem to $\mathcal T$ we
need to know that $\mathcal T$ is complete as a metric space. We will
prove this by showing that $\mathcal T$ is a continuous image of the
Cantor set. Recall that the Cantor set $C$ can be identified with
$2^{\mathbb N}$, where the distance between any two distinct subsets
$I,J\subseteq \mathbb N$ is defined by
$$
\d (I,J)=\frac{1}{\min (I \vartriangle J)}.
$$

\begin{cor}\label{Cantor}
The map from the Cantor set $C$ to $\mathcal T$ defined by $J\mapsto
(G(\infty, J), (a_1,a_2))$ is Lipschitz. In particular, this map is
continuous and $\mathcal T$ is compact.
\end{cor}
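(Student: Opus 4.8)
The plan is to prove the stronger statement that the map is $1$-Lipschitz (non-expanding), from which continuity is immediate; compactness of $\mathcal T$ then follows since $\mathcal T$ is, by definition, the image of the compact space $C$ under this map.

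First I would fix distinct $I,J\subseteq \mathbb N$ and set $m=\min(I\vartriangle J)$, so that $\d(I,J)=1/m$. Put $r=m-1$; then $I$ and $J$ agree on $[1,r]$, and I write $K=I\cap[1,r]=J\cap[1,r]$. Denoting by $N_I,N_J\lhd F_2$ the normal subgroups with $G(\infty,I)=F_2/N_I$ and $G(\infty,J)=F_2/N_J$, the goal is to show that $N_I\vartriangle N_J$ contains no word of length $\le r$, which will give $\d(G(\infty,I),G(\infty,J))\le 1/m$.

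The key tool is Lemma \ref{nathom}, which I apply to the two pairs (truncated set $K$, full set $I$) and (truncated set $K$, full set $J$); this is legitimate since $K=I\cap[1,r]$ and, because $I$ and $J$ agree on $[1,r]$, also $K=J\cap[1,r]$. I obtain natural homomorphisms $\e_I\colon G(\infty,K)\to G(\infty,I)$ and $\e_J\colon G(\infty,K)\to G(\infty,J)$, each fixing $a_1,a_2$, whose kernels contain no nontrivial element of $G(\infty,K)$ of length $\le r$. Now let $w$ be a word in $\{a_1,a_2\}^{\pm1}$ with $|w|\le r$. If $w=1$ in $G(\infty,K)$, then, as $\e_I$ and $\e_J$ are homomorphisms fixing the generators, $w=1$ in both $G(\infty,I)$ and $G(\infty,J)$. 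If $w\ne 1$ in $G(\infty,K)$, then $w$ represents a nontrivial element of $G(\infty,K)$ of length $\le r$, which therefore lies in neither $\ker \e_I$ nor $\ker \e_J$; hence $w\ne 1$ in both $G(\infty,I)$ and $G(\infty,J)$. In either case $w\in N_I\iff w\in N_J$.

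Thus $N_I\vartriangle N_J$ contains no word of length $\le r=m-1$, so every word in it has length $\ge m$, giving $\d(G(\infty,I),G(\infty,J))\le 1/m=\d(I,J)$; the map is therefore $1$-Lipschitz and in particular continuous, and $\mathcal T$, being the continuous image of the compact Cantor set $C$, is compact. The one place demanding care is the bookkeeping of lengths in Lemma \ref{nathom}(b): one cannot compare $G(\infty,I)$ and $G(\infty,J)$ directly, because there need be no natural homomorphism between them, so the argument must route through the common truncation $G(\infty,K)$ and exploit that both comparison maps have kernels free of short nontrivial elements.
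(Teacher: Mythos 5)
Your proof is correct and takes essentially the same route as the paper's: both pass through the common truncation $G(\infty,K)$ with $K=I\cap[1,m-1]=J\cap[1,m-1]$ and apply part (b) of Lemma \ref{nathom} to each of the two natural homomorphisms $G(\infty,K)\to G(\infty,I)$ and $G(\infty,K)\to G(\infty,J)$. The only cosmetic difference is that the paper concludes via the ultrametric inequality $\d(G(\infty,I),G(\infty,J))\le\max\{\d(G(\infty,I),G(\infty,K)),\,\d(G(\infty,J),G(\infty,K))\}$, whereas you unwind that inequality by comparing words of length $\le m-1$ directly.
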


\begin{proof}
Let $I,J\subseteq \mathbb N$. Suppose now that $\d (I,J)=1/r$ for some
$r\ge 1$ in $C$. Let $K=I\cap [1,r-1]=J\cap[1,r-1]$. By part (b)
of Lemma \ref{nathom}, we have $\d (G(\infty, I), G(\infty, K))\le 1/r$
and $\d (G(\infty, J), G(\infty, K))\le 1/r$. Since $\d$ is an ultrametric, we obtain
$$
\d (G(\infty, I), G(\infty, J))\le \max\{ \d (G(\infty, I), G(\infty, K)), \d (G(\infty, J), G(\infty, K)))\le 1/r= \d (I,J).
$$
Thus the map $C\to \mathcal T$ is $1$-Lipschitz.
\end{proof}

Our next goal is to show that $\mathcal T$ contains a dense subset of
topologizable groups. We begin with an auxiliary result.

\begin{lem}\label{non-simple}
Let $I$ be a finite subset of $\mathbb N$. Then for every non-trivial
element $g\in G(\infty, I)$, there exists a non-trivial normal subgroup
$N\lhd G(\infty, I)$ such that $g\notin N$.
\end{lem}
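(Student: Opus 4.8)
The plan is to realize $N$ as the kernel of a natural homomorphism obtained by adjoining a single extra batch of high-rank relations to $G(\infty,I)$. Let $\ell$ be the length of $g$ with respect to $\{a_1,a_2\}$ and fix an integer $r$ with $r\ge\ell$ and $I\subseteq[1,r]$, so that $I=I\cap[1,r]$. Using Lemma \ref{periods} I would choose a rank $i_0>r$ with $\mathcal X_{i_0}\ne\varnothing$ and set $J=I\cup\{i_0\}$; since $i_0>r$ we have $J\cap[1,r]=I$, so Lemma \ref{nathom}(a) provides a natural epimorphism $\e\colon G(\infty,I)\to G(\infty,J)$. I would then take $N={\rm Ker}\,\e$, which is normal in $G(\infty,I)$ by construction.

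That $g\notin N$ is immediate: by Lemma \ref{nathom}(b) the subgroup ${\rm Ker}\,\e$ contains no nontrivial element of length $\le r$, and $g\ne 1$ has length $\ell\le r$, so $g\notin N$.

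The main point — and the only real obstacle — is to show $N\ne\1$, i.e. that $\e$ is not injective, equivalently that $G(\infty,J)$ is a proper quotient of $G(\infty,I)$. Here I would exploit uniformness of the construction: $I$ and $J$ agree on $[1,i_0-1]$, so $G(\infty,I)$ and $G(\infty,J)$ have identical periods and relations through rank $i_0-1$, and at rank $i_0$ the two presentations differ only in that $G(\infty,J)$ additionally imposes the second-type relations (\ref{t21})--(\ref{t22}). For a period $A\in\mathcal X_{i_0}$ of length $i_0>r\ge\max I$ one has $a_1\notin\langle A\rangle$, and the maximal set $\mathcal Y_A$ contains representatives $T\notin\langle A\rangle a_1\langle A\rangle$, so a relation $W:=a_1A^nTA^{n+2}\cdots TA^{n+2h-2}=1$ as in (\ref{t21}) is genuinely imposed in $G(\infty,J)$. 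Thus $W=1$ in $G(\infty,J)$, and it remains to verify $W\ne 1$ in $G(\infty,I)$. This is exactly where the machinery of \cite{book} enters. Since the canonical presentation of $G(\infty,I)$ satisfies condition R by Lemma \ref{Gij}(a), and since the exponents $n,n+2,\dots,n+2h-2$ in $W$ are all strictly smaller than $n_0$ (by (\ref{n0}), as $n_0>(h+1)n$), the word $W$ contains no subword $A^{n_0}$ and is not itself a relator of $G(\infty,I)$. The independence of relators guaranteed by condition R — via the boundary-length estimate of \cite[Theorem 22.2]{book}, exactly the diagram-and-rank argument already used in the proof of Lemma \ref{anticommutator} — then forces any reduced diagram over $G(\infty,I)$ with boundary label $W$ to have rank $0$, whence $W\ne 1$ in $G(\infty,I)$. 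Consequently $W\in N\setminus\1$, so $N\ne\1$, completing the proof.

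I expect the verification that $W\ne 1$ in $G(\infty,I)$ to be the technical heart: one must show that adjoining the rank-$i_0$ second-type relators really does collapse something, which is a statement about the independence of relators in the canonical presentation rather than a formal consequence of Lemmas \ref{Gij}--\ref{nathom}. An alternative, slightly less self-contained route would take $J$ cofinite with $J\cap[1,r]=I$, so that $G(\infty,J)$ is a Tarski Monster by Lemma \ref{Gij}(c); then $N\ne\1$ would follow once one knows that $G(\infty,I)$, having finite $I$, is not a Tarski Monster (for instance, that it has a non-cyclic proper subgroup), but this merely relocates the same difficulty into the subgroup structure of $G(\infty,I)$.
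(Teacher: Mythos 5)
Your overall strategy is the same as the paper's: pass to $J=I\cup\{i\}$ for a suitable large rank $i$ admitting a period $A$, let $N$ be the kernel of the natural homomorphism $G(\infty,I)\to G(\infty,J)$, deduce $g\notin N$ from Lemma \ref{nathom}(b), and certify $N\ne\{1\}$ by exhibiting a newly imposed relator $W=a_1A^nTA^{n+2}\cdots TA^{n+2h-2}$ of the second type. The first two steps are fine, but there are two genuine gaps in the nontriviality argument. The smaller one: you assert without justification that $\mathcal Y_A$ contains some $T\notin\langle A\rangle a_1\langle A\rangle$, so that a relation of type (\ref{t21}) is actually present in the presentation of $G(\infty,J)$. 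This requires an argument; the paper gets it by counting: $\langle A\rangle a_1\langle A\rangle$ has at most $n_0^2$ elements because $A^{n_0}=1$, and the rank $i$ is chosen large enough that the ball of radius $i$ in $G(\infty,I)$ has more than $n_0^2$ elements, so some word $T$ with $1\le |T|\le i<di$ lies outside that double coset, and after replacing $T$ by a shortest representative of $\langle A\rangle T\langle A\rangle$ one may assume $T\in\mathcal Y_A$.

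The more serious gap is your proof that $W\ne 1$ in $G(\infty,I)$. You claim that the boundary-length estimate of \cite[Theorem 22.2]{book} forces any reduced diagram over $G(\infty,I)$ with boundary label $W$ to have rank $0$, as in Lemma \ref{anticommutator}. That estimate only rules out faces whose perimeter is large compared with $|\partial\Delta|$; in Lemma \ref{anticommutator} the boundary components have length $1$ in the free-product metric, which is why positive rank is impossible there. Here $|W|$ is of order $hi(d+n+2h-2)$, so faces of small rank (e.g.\ a first-type face of rank $1$, of perimeter $n_0$) are not excluded at all, and the diagram need not have rank $0$. The paper's actual argument is different in a way that matters: (i) faces of rank $\ge i$ are impossible, because such a face would have to correspond to a first-type relator (there are no second-type relators of rank $\ge i$ in $G(\infty,I)$, as $I$ is finite and $i>\max I$), and its perimeter $\ge n_0i$ would exceed the bound $|\partial\Delta|/(1-\alpha)\le hi(d+n+2h-2)/(1-\alpha)<n_0i$ coming from \cite[Lemma 23.16]{book} and the second half of inequality (\ref{n0}); (ii) consequently $W=1$ would hold already in $G(i-1,I)=G(i-1,J)$, so the rank-$i$ canonical relator $W$ of $G(\infty,J)$ would be a consequence of relators of smaller rank, contradicting the independence of canonical relators (\cite[Corollary 25.1]{book}). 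Note that the independence statement is applied to the presentation of $G(\infty,J)$, where $W$ \emph{is} a relator, not to $G(\infty,I)$, where it is not. Your closing alternative (taking $J$ cofinite) is, as you say yourself, only a relocation of the same difficulty.
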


\begin{proof}
Let $l$ denote the word length of the element $g$ with respect to the
generating set $\{ a_1,a_2\}$. By Lemma \ref{periods}, there exists a
period $A$ of some rank
\begin{equation}\label{i}
i>\max \{l, \max I\} ,
\end{equation}
Since $G(\infty,I)$ is infinite by part (b) of Lemma \ref{Gij}, we can
additionally assume that balls of radius $i$ in $G(\infty,I)$ contain more
than $n_0^2$ elements.

Note that the double coset $\langle A\rangle a_1 \langle A\rangle $ in
$G(\infty,I)$ contains at most $n_0^2$ elements as $A^{n_0}=1$ in
$G(\infty,I)$. Therefore, by our choice of $i$, there exists a word $T$ of
length $1\le |T|\le i< di$ such that $T$ does not belong to
$\langle A\rangle a_1 \langle A\rangle $ in $G(\infty,I)$. Hence $T$ does
not belong to $\langle A\rangle a_1 \langle A\rangle $ in $G(i-1,I)$.
Replacing $T$ with the shortest word among all words representing elements
of the double coset $\langle A\rangle T \langle A\rangle \le G(i-1,I)$ if
necessary, we can assume that $T\in \mathcal Y_A$.

Let now $J=I\cup \{ i\}$. By (\ref{i}), Lemma \ref{nathom} applies to $I$
and $J$ with $r=i-1\ge l$. Let $N$ be the kernel of the natural
homomorphism $G(\infty, I)\to G(\infty, J)$. Then by part (b) of Lemma
\ref{nathom} we have $g\notin N$.

It remains to show that $N$ is nontrivial. To this end, we will show that
$$
1\ne a_1A^nTA^{n+2}\ldots TA^{n+2h-2}\in N
$$
in $G(\infty, I)$. Indeed $a_1A^nTA^{n+2}\ldots TA^{n+2h-2} \in N$ by the
construction of $G(\infty, J)$. Suppose that $a_1A^nTA^{n+2}\ldots
TA^{n+2h-2}=1$ in $G(I, \infty)$.  Let $\Delta $ be the corresponding
reduced disk diagram over $G(I, \infty)$. Then $\Delta $ is a $B$-map by
\cite[Lemma 26.5]{book} and part (a) of Lemma \ref{Gij}.

Note that $\Delta $ does not contain faces of rank $\ge i$. Indeed if such
faces existed, they would correspond to relations of the first type as
there are no relations of the second type of rank $\ge i$ in
$G(I,\infty)$. However by our choice of $n_0$ this is impossible since these
faces are ``too large"; more precisely, by \cite[Lemma 23.16]{book}, the
perimeter of each face in $\Delta $ is at most
$$
\frac{|\partial \Delta |}{1-\alpha} \le \frac{hi(d+n+2h-2)}{1-\alpha}< n_0 i
$$
(see (\ref{n0})), while the length of every relation of the first type of
rank $\ge i$ is at least $n_0i$.

Thus $\Delta $ is a diagram over $G(i-1,I)$. Since $G(i-1,I)=G(i-1,J)$,
$\Delta $ is also  a diagram over $G(i-1,J)$. Hence the relation
$a_1A^nTA^{n+2}\ldots TA^{n+2h-2}=1$ can be derived from relations of rank
$<i$ in $G(\infty,J)$. This contradicts \cite[Corollary 25.1]{book}, which
guarantees that the relations of the canonical presentation of
$G(\infty,J)$ are independent. The contradiction shows that
$a_1A^nTA^{n+2}\ldots TA^{n+2h-2}\ne 1$ in $G(I, \infty)$ and therefore
$N$ is non-trivial.
\end{proof}

\begin{cor}\label{fin}
Let $J$ be a finite subset of $\mathbb N$. Then $G(\infty, J)$ is
topologizable.
\end{cor}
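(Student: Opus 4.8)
The plan is to topologize $G(\infty,J)$ by exhibiting a descending chain of nontrivial normal subgroups with trivial intersection and then invoking the standard construction of a group topology from a filter base of normal subgroups. For each integer $m>\max J$ I would put $J_m=J\cup\{m,m+1,m+2,\dots\}$, so that $J=J_m\cap[1,m-1]$. By part (a) of Lemma \ref{nathom} there is a natural epimorphism $\pi_m\colon G(\infty,J)\to G(\infty,J_m)$; set $K_m=\ker\pi_m$. Since $J_{m+1}\subset J_m$ (the two sets differ only at the rank $m$, which lies in $J_m$ but not in $J_{m+1}$), the group $G(\infty,J_m)$ is obtained from $G(\infty,J_{m+1})$ by imposing the additional relations of the second type of rank $m$; hence $G(\infty,J_m)$ is a natural quotient of $G(\infty,J_{m+1})$ and therefore $K_{m+1}\subseteq K_m$. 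Thus $\{K_m\}_{m>\max J}$ is a descending chain of normal subgroups.

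Next I would check the two properties that make this chain useful. Trivial intersection is immediate from part (b) of Lemma \ref{nathom}: applied to $I=J$, $J_m$, and $r=m-1$, it shows that $K_m$ contains no nontrivial element of length $\le m-1$; hence any fixed $g\ne 1$ lies outside $K_m$ once $m$ exceeds its length, so $\bigcap_m K_m=\{1\}$. For nontriviality I would run the argument of Lemma \ref{non-simple} with $I=J$ and with its auxiliary period rank chosen to be $m$ itself. This choice is admissible for infinitely many $m$ (namely those $m$ that are period ranks with balls of radius $m$ of cardinality $>n_0^2$, which exist by Lemma \ref{periods} and part (b) of Lemma \ref{Gij}), and for such $m$ the lemma yields a nontrivial $N=\ker\big(G(\infty,J)\to G(\infty,J\cup\{m\})\big)$. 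Since $J\cup\{m\}=J_m\cap[1,m]$ is a truncation of $J_m$, Lemma \ref{nathom}(a) gives a natural epimorphism $G(\infty,J\cup\{m\})\to G(\infty,J_m)$, whence $N\subseteq K_m$ and $K_m\ne\{1\}$. Passing to the subsequence of admissible ranks $m_1<m_2<\cdots$, I obtain a descending chain $K_{m_1}\supseteq K_{m_2}\supseteq\cdots$ of nontrivial normal subgroups with $\bigcap_j K_{m_j}=\{1\}$.

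Finally I would declare the subgroups $K_{m_j}$ together with all their cosets to be a base of open sets. Because the $K_{m_j}$ are normal and form a descending, hence directed, filter base, this is a base of neighbourhoods of $1$ for a group topology on $G(\infty,J)$. The topology is Hausdorff because $\bigcap_j K_{m_j}=\{1\}$, and it is non-discrete because no $K_{m_j}$ is trivial, so $\{1\}$ is not open. Hence $G(\infty,J)$ is topologizable.

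I expect the genuine difficulty to be the descending property $K_{m+1}\subseteq K_m$, that is, the existence of the natural epimorphism $G(\infty,J_{m+1})\to G(\infty,J_m)$. The two index sets differ only at the \emph{interior} rank $m$, so their sets of second-type relators are not literally nested (the periods of rank $>m$ are re-chosen once the rank-$m$ relations are imposed), and this comparison cannot be read off from Lemma \ref{nathom} alone, whose technique only handles truncations. It rests instead on the uniformity of the construction and the diagram machinery of \cite{book}; this is precisely the nested chain of normal subgroups underlying \cite[Theorem 39.3]{book}. Everything else—nontriviality, trivial intersection, and the passage to a topology—is routine given Lemmas \ref{non-simple} and \ref{nathom}.
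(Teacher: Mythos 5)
Your overall strategy (a family of nontrivial normal subgroups with trivial intersection, taken as a neighbourhood base of $1$) is the same as the paper's, and your use of Lemma \ref{nathom}(b) for trivial intersection and of Lemma \ref{non-simple} for nontriviality is sound. But there is a genuine gap exactly where you suspect one: the descending property $K_{m+1}\subseteq K_m$. Your justification --- that $G(\infty,J_m)$ is obtained from $G(\infty,J_{m+1})$ by adding the second-type relations of rank $m$ --- does not hold, because the two index sets $J_m$ and $J_{m+1}$ first differ at rank $m$, so the uniformity of the construction only forces the periods and the sets $\mathcal Y_A$ to agree up to rank $m-1$. From rank $m$ on, the groups $G(i,J_m)$ and $G(i,J_{m+1})$ diverge, the periods of ranks $>m$ are chosen relative to different quotients, and the resulting sets of second-type relators are not nested in either direction. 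Lemma \ref{nathom} only compares a set with its truncations, so it gives maps $G(\infty,J)\to G(\infty,J_m)$ and $G(\infty,J)\to G(\infty,J_{m+1})$ but no map between the two targets. Without $K_{m+1}\subseteq K_m$ (or at least the nontriviality of finite intersections $K_{m_1}\cap\dots\cap K_{m_j}$, which is equally unestablished), your family is not a filter base and the topology you declare could be discrete. Acknowledging that the step ``rests on the diagram machinery of \cite{book}'' is not a proof of it.

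The paper sidesteps this entirely with a small but decisive trick: it builds the chain inductively, setting $N_{j+1}=[N_j,N]$ where $N$ is a fresh kernel from Lemma \ref{non-simple} avoiding $g_{j+1}$. Since $N_j$ and $N$ are both normal, $[N_j,N]\le N_j\cap N$ automatically, so the chain is descending and avoids $g_1,\dots,g_{j+1}$ with no comparison of presentations needed. Nontriviality of $[N_j,N]$ follows because otherwise $N_j$ would lie in the centralizer of $N$; centralizers of elements in $G(\infty,J)$ are cyclic by \cite[Theorem 26.5]{book}, hence finite in a torsion group, forcing $N_j$ to be finite --- contradicting the preliminary observation that every nontrivial normal subgroup of $G(\infty,J)$ is infinite. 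You should either adopt this commutator device or supply an actual proof that your kernels are nested; as written, the argument is incomplete at its central step.
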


\begin{proof}
It suffices to construct a sequence of infinite normal subgroups
\begin{equation}\label{Ni}
N_1\rhd N_2\rhd \ldots
\end{equation}
of $G(\infty, J)$ with trivial intersection. Then taking
$\{ N_i\}_{i\in \mathbb N}$ as the base of neighborhoods of $1$, we obtain
a group topology on $G(\infty, J)$ which is Hausdorff
as $\bigcap_{i\in \mathbb N} N_i=\{ 1\}$ and is non-discrete as every
$N_i$ is infinite.

To this end, we first note that every non-trivial normal subgroup
$M\lhd G(\infty, J)$ is infinite. Indeed otherwise the centralizer
$C_{G(\infty, J)}(M)$ has finite index in $G(\infty, J)$.
By \cite[Theorem 26.5]{book} the centralizer of every element in
$G(\infty, J)$ is cyclic.  Since $G(\infty, J)$ is torsion by part (b) of
Lemma \ref{Gij} we obtain that $C_{G(\infty, J)}(M)$ is finite and hence
so is $G(\infty, J)$.  However this contradicts part (b) of Lemma \ref{Gij}.

Now we construct the desired sequence (\ref{Ni}) by induction. Let
$G(\infty, J)=\{1, g_1, g_2, \ldots \}$.
By Lemma \ref{non-simple} we can find a non-trivial subgroup $N_1\lhd
G(\infty, J)$ that does not contain $g_1$.  Suppose that $N_j$ is already
constructed for some $j\ge 1$ and $\{ g_1, \ldots , g_{j}\}\cap
N_j=\emptyset$. Applying  Lemma \ref{non-simple} again, we can find a
non-trivial subgroup $N\lhd G(\infty, J)$ such that $g_{j+1}\ne N$. Let
$N_{j+1}=[N_j,N]$. Obviously $\{ g_1, \ldots , g_{j+1}\}\cap
N_{j+1}=\emptyset$.  Note also that $N_{j+1}$ is nontrivial. Indeed
otherwise $N_j\le C_{G(\infty, J)} (N)$ and arguing as in the previous
paragraph we obtain that $N_j$ is finite; however this contradicts the
fact that every non-trivial normal subgroup of $G$ is infinite. This
completes the inductive step. Obviously
$\bigcap_{i\in \mathbb N} N_i=\{1\}$ and thus the lemma is proved.
\end{proof}

\begin{rem}\label{Gd-in-a-subset}
If $D$ is a $G_\delta $ subset of a topological space $X$ and
$Y$ is a subspace of $X$, then $D\cap Y$ is a $G_\delta $ subset of $Y$. This observation will be used several times below.
\end{rem}

\begin{proof}[Proof of Theorem \ref{ttm}]
Note that the set of finite subsets of $\mathbb N$ is dense in the Cantor
set $C$. Hence its image is dense in $\mathcal T$ by Corollary
\ref{Cantor}. Using Lemma \ref{fin} we obtain that $\mathcal T$ contains a
dense subset of topologizable groups. Then by Proposition \ref{Gd} and Remark \ref{Gd-in-a-subset} we
conclude that the property of being topologizable is generic
in $\mathcal T$.

Further the set of all cofinite subsets of $\mathbb N$ is also dense in
the Cantor set. Using Lemma~\ref{Gij} and arguing as in the previous
paragraph, we obtain that the property of being a Tarski Monster (of
exponent $n_0$) is generic in $\mathcal T$.

Since $\mathcal T$ is compact, we can apply the Baire Category Theorem,
which implies that the property of being a topologizable Tarski Monster
(of exponent $n_0$) is also generic in $\mathcal T$. In particular, such
groups exist.
\end{proof}


\section{Further speculations}

One can produce many other examples of ``exotic" topologizable groups
using the fact that most limits of ``hyperbolic-like" groups are
topologizable. More precisely, we recall that an isometric action of a group $G$ on a metric space $S$ is called {\it acylindrical} if for every $\e>0$ there exist $R,N>0$ such that for every two points $x,y$ with $\d (x,y)\ge R$, there are at most $N$ elements $g\in G$ satisfying
$$
\d(x,gx)\le \e \;\;\; {\rm and}\;\;\; \d(y,gy) \le \e.
$$
A group $G$ is called \emph{acylindrically hyperbolic} if it acts acylindrically and non-elementary on a (Gromov) hyperbolic space. Recall also that non-elementarity of the action can be defined in this context by requiring that $G$ is not virtually cyclic and has unbounded orbits. For details we refer to \cite{Osi13}.

The class of acylindrically hyperbolic groups contains many examples of interest: non virtually cyclic hyperbolic groups, non virtually cyclic  relatively hyperbolic groups with proper peripheral subgroups, all but finitely many mapping class groups of punctured closed surfaces, $Out(F_n)$ for $n\ge 2$, groups acting properly on proper $CAT(0)$ spaces and containing rank $1$ elements, and so forth \cite{DGO,Osi13}. 

The proof of the following lemma relies heavily on results of \cite{DGO}; we will refer the reader to \cite{DGO} for definitions of the auxiliary notions used in the proof.  In the particular
cases of hyperbolic and relatively hyperbolic groups one could
alternatively use results of \cite{Ols93} or \cite{Osi07}. 

\begin{lem}\label{he}
Every acylindrically hyperbolic group is topologizable.
\end{lem}

\begin{proof}
Let $G$ be an acylindrically hyperbolic group. By \cite[Theorem 1.2]{Osi13}, $G$ contains non-degenerate hyperbolically embedded subgroups (see \cite{DGO} for the definition). This allows us to apply \cite[Theorem 2.23]{DGO}, which guarantees that there exists a hyperbolically embedded
subgroup $H\le G$ such that $H\cong \mathbb Z\times K$, where $K$ is a
finite group. In particular, $H$ contains an infinite chain of infinite
normal (in $H$) subgroups $N_1\rhd N_2 \rhd \ldots$ with trivial
intersection. Let $M_i$ denote the normal closure of $N_i$ in $G$. Since $H$ is hyperbolically embedded, 
the group-theoretic Dehn surgery theorem (see \cite[Theorem 2.25
(c)]{DGO}) implies that $\bigcap_{i\in \mathbb N} M_i=\{ 1\}$. Now we can
use the chain $M_1\rhd M_2 \rhd \ldots$ to define a Hausdorff
topology on $G$, taking $\{ M_i\mid i\in \mathbb N\}$ as the base of
neighborhoods at $1$. Since every $N_i$ is infinite, so is $M_i$ and hence
the topology is non-discrete.
\end{proof}

Using chains of normal subgroups $N_1\rhd N_2 \rhd \ldots$ as bases of neighborhoods is a fairly standard approach to defining a topology on a given group $G$. It is interesting to ask whether one can topologize a ``hyperbolic-like" group in an essentially different way. The question can be formalized as follows: \emph{Under what conditions does an acylindrically hyperbolic group admit a topology with respect to which it is topologically simple?}

Note that acylindrically hyperbolic groups are very far from being abstractly simple \cite{DGO}. Of course, $G$ is not topologically simple in the topology defined in the proof of Lemma \ref{he} as well, since every $N_i$ is closed. However, we conjecture the following.

\begin{conj}\label{topsim}
Suppose that an acylindrically hyperbolic group $G$ has no non-trivial finite normal subgroups. Then $G$ admits a topology with respect to which it is topologically simple.
\end{conj}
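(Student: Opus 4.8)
The plan is to reduce the conjecture to a single group-theoretic statement about highly transitive actions. First I observe that it suffices to produce an \emph{injective} homomorphism $\iota\colon G\hookrightarrow S$ with dense image into some topologically simple Hausdorff topological group $S$; the desired topology on $G$ is then the initial (pullback) topology, under which $G$ is a Hausdorff topological group. Indeed, if $N\lhd G$ is a nontrivial closed normal subgroup, then $\iota(N)\lhd \iota(G)$ and, since $\iota(G)$ is dense, the closure $\overline{\iota(N)}$ is normal in $S$; topological simplicity of $S$ forces $\overline{\iota(N)}$ to be $\{1\}$ or $S$. In the first case injectivity gives $N=\{1\}$, and in the second case $\iota(N)$ is dense in $\iota(G)$, so $N$, being closed, equals $G$. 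Thus $G$ is topologically simple. Note that the chains of normal subgroups used in Lemma~\ref{he} can never achieve this, since each member of such a chain is automatically closed; so a genuinely different mechanism, using non-normal neighborhoods, is required.

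Second, I would take $S=\mathrm{Sym}(\mathbb N)$, the full symmetric group on a countable set with the topology of pointwise convergence. By the Baer--Schreier--Ulam classification, the only normal subgroups of $\mathrm{Sym}(\mathbb N)$ are the trivial group, the finitary alternating group, the finitary symmetric group, and $\mathrm{Sym}(\mathbb N)$ itself; the two finitary groups are dense, so the only \emph{closed} normal subgroups are $\{1\}$ and $\mathrm{Sym}(\mathbb N)$, i.e.\ $\mathrm{Sym}(\mathbb N)$ is topologically simple. Moreover a subgroup of $\mathrm{Sym}(\mathbb N)$ is dense if and only if its action is \emph{highly transitive} (transitive on injective $k$-tuples for every $k$), because basic open sets are prescribed by finitely many point-images. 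Combining this with the previous paragraph, the conjecture (for countable $G$) reduces to the assertion that $G$ admits a \emph{faithful} highly transitive action on a countable set.

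The hard part is precisely this last assertion, and the hypothesis on $G$ is exactly what it needs. A nontrivial normal subgroup of a highly transitive group is again highly transitive, hence infinite, so faithful high transitivity is incompatible with a nontrivial finite normal subgroup; thus the hypothesis that $G$ has no nontrivial finite normal subgroup is necessary and should be the key that makes the construction faithful. To build the action I would push the construction in Lemma~\ref{he} much further: rather than stopping at a single descending chain of normal subgroups, I would use the Dehn surgery and small cancellation machinery over hyperbolically embedded subgroups from \cite{DGO} (and \cite{Osi13}) to realize larger and larger finite partial permutations of a fixed countable set by elements of $G$, while a diagonal argument keeps the limiting action faithful. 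This is exactly the kind of highly transitive action produced (by Hull and Osin) for countable acylindrically hyperbolic groups with trivial finite radical, and establishing it is the main obstacle; the topological reduction above then converts it immediately into the required topologically simple topology, which is automatically non-discrete since $\mathrm{Sym}(\mathbb N)$ has no isolated points. For uncountable $G$ one would instead seek a dense embedding into $\mathrm{Sym}(\Omega)$ for a suitable infinite set $\Omega$, the countable case being the essential one.
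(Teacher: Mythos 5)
The statement you are proving is labelled a \emph{conjecture} in the paper, and your proposal does not close the gap that makes it one. Your topological reduction is correct and is exactly the paper's own argument: a faithful action of $G$ on $\mathbb N$ that is $k$-transitive for every $k$ is the same thing as an injective homomorphism of $G$ onto a dense subgroup of $S(\mathbb N)$ with the topology of pointwise convergence; the only proper non-trivial normal subgroups of $S(\mathbb N)$ are the finitary alternating and symmetric groups (\cite[Theorem 8.1A]{DM}), both dense, so $S(\mathbb N)$ is topologically simple; and pulling back the topology makes $G$ topologically simple and non-discrete. The paper carries out precisely this reduction and then invokes Chaynikov's theorem \cite{Cha}, which supplies the required faithful highly transitive action \emph{only for non-elementary hyperbolic groups} with no non-trivial finite normal subgroups; for general acylindrically hyperbolic groups the paper merely says that a generalization of Chaynikov's result ``seems plausible''.

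That generalization is the entire content of the conjecture, and your treatment of it is a one-sentence sketch, not a proof. Saying that you would ``push the construction in Lemma~\ref{he} much further'' and ``realize larger and larger finite partial permutations \dots while a diagonal argument keeps the limiting action faithful'' names a strategy without executing any of it: you do not explain how small cancellation or Dehn surgery over a hyperbolically embedded subgroup lets you prescribe the images of a given finite set of points under a given finite set of group elements, nor where the hypothesis on finite normal subgroups actually enters the construction (you only observe, correctly, that it is \emph{necessary} for a faithful highly transitive action to exist, which is the easy direction). The existence of such actions for countable acylindrically hyperbolic groups with trivial finite radical was established only later, by Hull and Osin, in a substantial separate paper; it cannot be absorbed into a proof of the conjecture as a routine extension of Lemma~\ref{he}. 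You also leave the uncountable case entirely open. So the proposal reproduces the paper's reduction but leaves the conjecture exactly as open as the paper does.
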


Note that the absence of non-trivial finite normal subgroups is necessary as finite subgroups are always closed.

Conjecture \ref{topsim} holds for hyperbolic groups. Indeed, Chaynikov \cite{Cha} proved that every non-elementary hyperbolic group $G$ without non-trivial finite normal subgroups admits a faithful action on $\mathbb N$ which is $k$-transitive for every $k\in \mathbb N$. This action defines a dense embedding $G\to S(\mathbb N)$, where $S(\mathbb N)$ is the group of all permutations of $\mathbb N$ endowed with the topology of pointwise convergence. Let $A_{fin}(\mathbb N)=\bigcup_{n\in \mathbb N} A_n$ and $S_{fin}(\mathbb N)=\bigcup_{n\in \mathbb N} S_n$, where $A_n$ and $S_n$ are the groups of even permutations and all permutations of $\{ 1, \ldots, n\}$, respectively, naturally embedded in $S(\mathbb N)$. Then $A_{fin}(\mathbb N)$ and $S_{fin}(\mathbb N)$ are the only proper non-trivial normal subgroups of $S(\mathbb N)$ (see \cite[Theorem 8.1A]{DM}). Obviously both of them are dense and hence $S({\mathbb N})$ is topologically simple. Now using the fact that the image of $G$ is dense in $S({\mathbb N})$ it is straightforward to verify that $G$ is topologically simple with respect to the topology induced by the embedding. In seems plausible that the Chaynikov's result can be generalized to groups from $\mathcal H_k$, which would imply Conjecture \ref{topsim} in the full generality.

\begin{proof}[Proof of Theorem \ref{hypemb}]
The theorem obviously follows from Lemma \ref{he}, part (a) of
Proposition~\ref{Gd}, and Remark \ref{Gd-in-a-subset}.
\end{proof}

To illustrate usefulness of Theorem \ref{hypemb}, we outline here the
proof of the existence of a topologizable groups with 2 conjugacy
classes. Details will appear in the forthcoming paper \cite{Hull}.

First examples of groups with 2 conjugacy classes other than
$\mathbb Z/2\mathbb Z$ were constructed by Higman, B.H. Neumann and
H. Neumann in~1949; first finitely generated examples were constructed by
the third author in \cite{Osi}. Motivated by the recent study of groups
with the Rokhlin property, (i.e., topological groups with a dense
conjugacy class) Glassner and Weiss ask in \cite{GW} whether there exist
topological analogues of these constructions. Specifically, they ask
whether there exists a non-discrete locally compact topological group with
2 conjugacy classes. Our approach allows to construct a non-discrete
group with 2 conjugacy classes; local compactness can not be ensured by
our methods although our group will be compactly generated (and even
finitely generated in the abstract sense).

To construct a topologizable group with $2$ conjugacy classes we first
recall that groups with exactly two conjugacy classes form a $G_\delta$
subset of $\mathcal G_k$ by part (f) of Proposition \ref{Gd}. Further let
$k\ge 2$ and let $\mathcal{AH}_{tf}$ denote the subset of all groups from
$\mathcal G_k$ that are torsion free and acylindrically hyperbolic. The technique developed
in \cite{Osi} can be extended to acylindrically hyperbolic groups to show that  $\overline{\mathcal{AH}_{tf}}$
contains a dense $G_\delta $ subset of groups with $2$ conjugacy classes; the proof can be found in \cite[ Corollary 8.10]{Hull}. 
Combining this with Theorem \ref{hypemb}, we obtain that a generic group
in $\overline{\mathcal{AH}_{tf}}$ is topologizable and all its non-trivial elements are conjugate.


\vspace{1cm}

\noindent \textbf{Anton A. Klyachko: } Department of Mechanics and Mathematics, Moscow State University, Russia.\\
E-mail: \emph{klyachko@mech.math.msu.su}\\

\smallskip

\noindent \textbf{Alexander A. Olshanskii: } Department of Mathematics, Vanderbilt University, Nashville 37240, U.S.A.\\
E-mail: \emph{alexander.olshanskiy@vanderbilt.edu}\\

\smallskip

\noindent \textbf{Denis V. Osin: } Department of Mathematics, Vanderbilt University, Nashville 37240, U.S.A.\\
E-mail: \emph{denis.osin@gmail.com}

\end{document}